\documentclass[12pt]{amsart}

\usepackage{amssymb}
\usepackage{amsmath, amsthm}
\usepackage{thmtools}
\usepackage{thm-restate}
\usepackage{mathrsfs}

\theoremstyle{plain}
\declaretheorem[name=Theorem,numberwithin=section]{theorem}
\newtheorem{lemma}{Lemma}[section]
\declaretheorem[name=Proposition, numberwithin=section]{proposition}
\newtheorem{corollary}{Corollary}[section]

\theoremstyle{definition}
\newtheorem{definition}{Definition}[section]

\newtheorem{conjecture}{Conjecture}[section]

\theoremstyle{remark}
\newtheorem{remark}{Remark}[section]

\usepackage[utf8]{inputenc}

\usepackage[
    backend=biber,
    style=alphabetic,
    sorting=nyt,
    maxalphanames=3,
    minalphanames=1,
    hyperref=true
]{biblatex}

\usepackage{textcase}   

\DeclareFieldFormat{labelalpha}{%
  \iffieldundef{labelnamescount}
    {#1}
    {%
      \ifnum\thefield{labelnamescount}=1
        \expandafter\splitSingle #1\relax
      \else
        \MakeUppercase{#1}%
      \fi
    }%
}

\def\splitSingle#1#2#3#4#5\relax{%
  \MakeUppercase{#1}\MakeLowercase{#2#3}#4#5%
}

\usepackage[colorlinks=true,citecolor=blue]{hyperref}
\title{On Dynamical Mordell-Lang Conjecture for split map}
\author{Yuanming Zhang}

\begin{document}

\begin{titlepage}
\thispagestyle{empty}
\maketitle

\begin{abstract}
In this article,  we prove the dynamical Mordell-Lang conjecture for split endomorphism $F=f_1\times\cdots\times f_n$ on the product space $C_1\times\cdots\times C_n$ defined over $\mathbb{C}$, where each $C_i$ is a curve with $f_i\in \text{End}_\mathbb{C}(C_i)$. Moreover, we proved that there is some effectively computable upper bound on the common difference of the arithmetic progressions. 

\end{abstract}

\tableofcontents

\end{titlepage}



\section{Introduction}

The Mordell-Lang Conjecture, originally formulated in the context of Diophantine geometry, asserts that the intersection of a finitely generated subgroup of an abelian variety with a closed subvariety is finite unless the subvariety contains a translate of a positive-dimensional abelian subvariety. This profound result, proven by Faltings for abelian varieties and extended by Vojta to semiabelian varieties, has inspired numerous analogs in arithmetic dynamics.

In the dynamical setting, the Dynamical Mordell-Lang Conjecture (DML), which is proposed by Ghioca and Tucker in \cite{ghioca2009periodic}, replaces group actions with iterations of an endomorphism and the finitely generated group by the orbit of a single point: 
\begin{conjecture}
Let \(X\) be a quasi-projective variety over an algebraically closed field \(K\) of characteristic zero, \(\Phi: X \to X\) an endomorphism, \(V \subset X\) a closed subvariety, and \(\alpha \in X(K)\) a starting point. Then set \(\{n \in \mathbb{N} : \Phi^n(\alpha) \in V\}\) is a finite union of arithmetic progressions. 
\end{conjecture}
This dynamical analog bridges arithmetic geometry and complex dynamics and connects to recurrence sequences as Scanon-Mahler-Lech type question.

Significant progress has been made on DML since its inception. Validation of the conjecture for the case $X=\mathbb{A}^2\ , \ \Phi=f_1\times f_2 , V=\triangle$, where $f_1, f_2$ are polynomials and $\triangle=\{(x,x) \ | \ x\in \mathbb{A}^1 \}$ the diagonal is made in \cite{GTZ07}, for the case of étale maps on quasi-projective varieties \cite{BGT10}, where the absence of ramification allows p-adic interpolation techniques to bound orbit intersections. The technics was used in the case of semiabelian varieties under group endomorphisms in \cite{Ghioca13}, also in the case of extensions to skew-linear maps on \(\mathbb{P}^1\) in \cite{GX18}. The case of birational polynomial morphism on affine plane was resolved by \cite{X14}, and later the case polynomial endomorphisms on $\mathbb{A}^2$ \cite{X15} which generalized \cite{GTZ07}, leveraging theory of valuation tree and intersection theory. 

In positive characteristic, the case becomes more complicated. She Yang proved the conjecture for totally inseparable liftings of Frobenius in \cite{Yang2024}. Analogs accounting for endomorphisms on algebraic tori with curves had been made in \cite{GhiD19}, with the times of intersection a finite union of arithmetic progression along with finitely many $p$-arithmetic sequences of the form $\{a+bp^{kn}\ | \ n\in \mathbb{N}\}$ for some positive integer $k$. 

Comprehensive treatments of these results appears in the monograph by Bell, Ghioca, and Tucker, which also explores sparse sets in orbits and positive characteristic variants \cite{BGT16} , also in \cite{xie2023around}.

Particular attention has been given to split maps, where the dynamics decouple across coordinates. For instance, on \(\mathbb{A}^n\) or \((\mathbb{P}^1)^n\), a split self-map \(\Phi = (f_1, \dots, f_n)\) acts independently on each factor. Known cases include the case $f_1=f_2$ is a rational function without periodic critical points\cite{BGKT12}. Pakovic in the case $X=(\mathbb{P}^1)^2 
\ ,\ V=\triangle$ and $\Phi=f\times g$ where $f,g$ are in the set of tame rational functions, which is Zariski open in the space of rational function of given degree\cite{P23}. For products of endomorphisms on an affine curve and a projective curve over \(\overline{\mathbb{Q}}\) \cite{XYZ26}, the conjecture holds, building on height inequalities related to the supper-attracting behavior. These developments pave the way for resolving DML on \((\mathbb{P}^1)^n\) under split rational maps, where the product structure allows reduction to one-dimensional dynamics while addressing cross-coordinate dependencies through arithmetic and analytic tools.

Other results concern the case for split maps are dynamical Manin mumford conjecture for split maps on $(\mathbb{P}^1)^2$(over $\mathbb{C}$ ) and the dynamical Bogomolov conjecture(over $\overline{\mathbb{Q}}$)\cite{GNY19}, and more recently the uniform dynamical Bogomolov conjecture for 1-parameter families of endomorphism on $(\mathbb{P}^1)^2$\cite{MS25}.

In this article, we advance towards a complete resolution of the Dynamical Mordell-Lang Conjecture for split endomorphisms on \((\mathbb{P}^1)^n\) over fields of characteristic zero. Our method extend the treatment of DML in the case of 'almost PCF' in \cite{BGT16} : The main theorem is the following

\begin{theorem}
    \label{mainthmone}
    For a finite set of irreducible curves $\{ C_i \}_{i=1,\cdots ,n}$ with set of endomorphisms $\{ f_i \ | \ f_i:C_i\rightarrow C_i\}$ all defined over $\mathbb{C}$, the \text{DML} conjecture holds for $(C_1\times \cdots \times C_n \ , \ f_1\times \cdots \times f_n)$.
\end{theorem}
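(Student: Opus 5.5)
We will reduce Theorem~\ref{mainthmone} to a small number of dynamical situations on the individual factors, and then treat the product by a $p$-adic interpolation argument refined in the ``almost PCF'' spirit of \cite{BGT16}.

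\textbf{Reductions.} First, DML for $(X,\Phi)$ follows from DML for $(X,\Phi^N)$: the return set for $\Phi$ is the union over $0\le r<N$ of translates of the return sets of $\Phi^N$ for the points $\Phi^r(\alpha)$. Second, DML is insensitive to passing to a finitely generated field of definition. Taking normalizations and compactifications, we may assume each $C_i$ is smooth projective and each $f_i$ extends. The orbit stays inside the original open set, so intersecting with the original variety is harmless.

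By genus, the only curves carrying nonconstant endomorphisms of degree $\ge 2$ are $\mathbb{P}^1$. Elliptic curves only admit affine maps $x\mapsto \phi(x)+b$. Curves of genus $\ge 2$ only admit automorphisms of finite order; after replacing $F$ by an iterate, these factors are the identity and can be discarded by fixing that coordinate of $\alpha$. Constant maps can be handled the same way after one step. So, after reordering, $F=g\times h\times \psi$, where:
\begin{itemize}
\item $g$ is a product of group-type maps on elliptic curves and automorphisms or degree-one maps of $\mathbb{P}^1$ (these are étale);
\item $h$ is a product of rational maps of degree $\ge 2$ on $\mathbb{P}^1$;
\item $\psi$ is trivial.
\end{itemize}
Coordinates whose orbit is preperiodic can likewise be frozen after passing to an iterate. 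We may therefore assume every coordinate orbit is infinite.

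\textbf{Main argument.} Following \cite{BGT10,BGT16}, we embed the finitely generated field of definition into $\mathbb{Q}_p$ for a suitable prime $p$ with good reduction for all the data. In each coordinate we look for a $p$-adic analytic parametrization $n\mapsto F^{Nn+r}(\alpha)$ on residue classes, $n\in\mathbb{Z}_p$.
\begin{itemize}
\item For étale coordinates, this is the standard lemma: the reduction of the orbit is eventually periodic, and on a suitable residue disc $\Phi^N$ is analytically conjugate to a map close to the identity, which interpolates.
\item For the degree $\ge 2$ coordinates, the obstruction is ramification. The almost-PCF method of \cite{BGT16} chooses $p$ so that the reduced orbit of $\alpha_i$ avoids the reductions of critical points, except for periodic critical points, which are superattracting.
\end{itemize}
This splits the coordinates into two types:
\begin{itemize}
\item coordinates where the orbit eventually lies in residue discs free of critical points, so interpolation works as in the étale case;
\item coordinates where the orbit lies in the basin of a superattracting cycle in the $p$-adic sense, on which $f_i^N$ is conjugate to $z\mapsto z^d$ by Böttcher's theorem.
\end{itemize}
In the second case the coordinate is $\beta^{d^{n}}$, a function of $n$ that is not $p$-adic analytic in $n$ but is analytic in the variable $d^n$.

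\textbf{The mixed case.} With analytic parametrizations $\theta_j(n)$ for étale-type coordinates and Böttcher coordinates $u_k^{d_k^{n}}$ for superattracting ones, an equation $G=0$ of $V$ becomes a relation between analytic functions of $n$ and of exponentials $d_k^{n}$. I would handle this by height/growth arguments in the style of \cite{XYZ26}.
\begin{itemize}
\item If superattracting coordinates genuinely appear in $G$, then along the subsequence the $p$-adic sizes $|u_k^{d_k^n}|$ decay doubly exponentially, while the analytic coordinates vary only polynomially in $n$.
\item Expanding $G$, the terms of lowest order must cancel. This forces either finitely many solutions or a relation defining a proper periodic subvariety, and we then induct on $\dim V$.
\item Distinct degrees $d_k$ among superattracting coordinates are handled by comparing growth rates: $\log$-valuations grow like $d_k^n$, which forces the involved monomials to agree on a periodic subvariety.
\end{itemize}

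The main obstacle is the choice of prime. We need a single $p$ such that every degree $\ge 2$ coordinate orbit avoids the reductions of all non-periodic critical points in the relevant residue discs. For a general (non-almost-PCF) map this may fail, since infinitely many critical orbits may collide mod $p$. I would first try to show that by a Zsigmondy/primitive-divisor type argument one can pick $p$ so that the collision happens at most finitely often along the orbit. The finitely many exceptional indices are then absorbed into the finite set allowed in the conclusion. If this fails, I would fall back on a complex-analytic argument for the remaining coordinates.
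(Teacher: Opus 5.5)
Your proposal has a genuine gap exactly where you locate ``the main obstacle.'' You never produce a prime $\mathfrak{p}$ at which the reduced orbit of each degree $\ge 2$ coordinate eventually avoids the reductions of the critical points, yet both your interpolation step and your case split presuppose one. The Zsigmondy route points the wrong way: primitive-divisor theorems produce primes dividing $f_i^{n}(\alpha_i)-c$, i.e.\ primes where collisions \emph{do} occur. Moreover, the orbit mod $\mathfrak{p}$ is eventually periodic, so ``finitely many collisions'' just means ``the eventual cycle mod $\mathfrak{p}$ misses every $\overline{c}$,'' and that has to be forced by Galois theory.

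This is the missing idea, which the paper supplies in Proposition~\ref{prop1} together with the Chebotarev lemma of Section~3. In the local Galois group at an auxiliary prime of good, tame reduction, a single element $\gamma=\tau\sigma^{f}$ acts without fixed points on every deep preimage set $S_{i,j}$. Here $S_{i,j}=f_i^{-\mathcal{N}}(c_{i,j})$, or $S_{i,j}=f_i^{-(\mathcal{N}-1)}(f_i^{-1}(c_{i,j})\setminus\{c_{i,j}\})$ when $c_{i,j}$ is fixed. The tame inertia generator $\tau$ moves the preimages lying in residue discs of periodic critical classes, which are tamely ramified by the Newton polygon argument. For preimages with strictly preperiodic reductions, $\gamma$ acts on residues as a fixed power of Frobenius, and those reductions eventually leave the corresponding finite field. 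Chebotarev then gives a positive-density set of primes $\mathfrak{q}$ whose Frobenius is conjugate to $\gamma$, so $\overline{S_{i,j}}$ has no $k_{\mathfrak{q}}$-rational point. The orbit consists of $K$-rational points (after enlarging $K$), so it never passes through $\overline{S_{i,j}}$ and hence never reaches any $\overline{c_{i,j}}$ after a bounded time.

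Because fixed (superattracting) critical points are included through the punctured sets, these primes keep the orbit out of superattracting residue discs altogether. The reduced orbit is therefore \'etale, $p$-adic interpolation \cite{Poo14,BGT10} finishes, and the B\"ottcher/mixed case of your plan never arises. As you wrote it, that mixed case is a second unproved step. Expanding $G$ in the B\"ottcher variables, the leading coefficient $n\mapsto G_0(\theta(n))$ is analytic and must satisfy $|G_0(\theta(n))|_p\le |u|_p^{d^{n}}$. If $G_0\circ\theta\not\equiv 0$, this only forces $n$ to be $p$-adically very close to one of finitely many zeros in $\mathbb{Z}_p$, which by itself does not give finiteness. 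If $G_0\circ\theta\equiv 0$, you need control of the higher coefficients. None of this, nor the induction on $\dim V$, nor the comparison of distinct $d_k$, is carried out, and it is the hard part of the superattracting case (compare \cite{XYZ26}, which treats a product of two curves).

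For coefficients outside $\overline{\mathbb{Q}}$, embedding into $\mathbb{Q}_p$ also does not transport the choice of prime. The paper specializes to a number-field parameter $c_0$ at which the non-preperiodic coordinates stay non-preperiodic, and chooses the prime there. It then takes, by a Baire argument, a generic point of the residue polydisc of $c_0$, whose reduced dynamics coincide with those at $c_0$. (Minor: elliptic curves do carry endomorphisms of degree $\ge 2$; what matters is that they are \'etale, as you use later.)
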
  

The key proposition for proving the theorem is the following

\begin{restatable}{proposition}{prop1}
\label{prop1}
    
    For a finite set of rational functions $f_1,\cdots,f_n$ defined over a number field with the sets of critical points $c_{i,j}\in K \ , \ j=1\cdots ,t_i$ of $f_i$ for $i=1,\cdots , n$. Up to some iteration we can assume every periodic critical point of some $f_i$ is a fixed point.Then we have a $\text{Gal}(\overline{K}/K)$-invariant set $S_{i,j}$ for each critical point $c_{i,j}$ and a Galois element $\gamma\in \text{Gal}(\overline{K}/K)$ acts fixed-point freely on each $S_{i,j}$ s.t. $S_{i,j}=f_i^{-\mathcal{N}}(c_{i,j})$ if $c_{i,j}$ is not fixed by $f_i$; $S_{i,j}=f_i^{-(\mathcal{N}-1)}(f^{-1}_{i}(c_{i,j})\backslash \{ c_{i,j} \})$ otherwise, for some positive number $\mathcal{N}$.

\end{restatable}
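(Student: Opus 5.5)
The plan is to produce $\gamma$ as a Frobenius element, following the ``almost PCF'' argument of \cite{BGT16}. Let $L$ be the compositum over $K$ of the fields of definition of all points in the finite sets $f_i^{-M}(c_{i,j})$, for every $i,j$ and every $M\le\mathcal N$; then $L/K$ is finite Galois. Choose a prime $\mathfrak p$ of $K$ with residue field $\mathbb F_q$ satisfying three conditions. First, every $f_i$ has good reduction at $\mathfrak p$. Second, $\mathfrak p$ is unramified in $L$. Third, distinct points of each relevant set (the sets $S_{i,j}$, the fixed critical points, and the finitely many auxiliary points introduced below) have distinct reductions modulo any prime $\mathfrak P\mid\mathfrak p$ of $L$. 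Conditions one and two exclude finitely many primes, and condition three will be imposed once $\mathcal N$ is chosen. Set $\mathcal N>q+1=\#\mathbb P^1(\mathbb F_q)$ and take $\gamma=\mathrm{Frob}_{\mathfrak P}$ for some $\mathfrak P\mid\mathfrak p$. The sets $S_{i,j}$ are $\mathrm{Gal}(\overline K/K)$-invariant because $c_{i,j}\in K$ and $f_i$ is defined over $K$. By the distinct-reduction condition, a point $x\in S_{i,j}$ is fixed by $\gamma$ exactly when its reduction $\bar x$ lies in $\mathbb P^1(\mathbb F_q)$. It therefore suffices to show that no reduction of a point of $S_{i,j}$ is $\mathbb F_q$-rational.

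Suppose $x\in S_{i,j}$ has $\bar x\in\mathbb P^1(\mathbb F_q)$. Then the chain $\bar x,\bar f_i(\bar x),\dots,\bar f_i^{\mathcal N}(\bar x)=\bar c_{i,j}$ consists of $\mathbb F_q$-points. Since $\mathcal N>q+1$, some point of the chain repeats, so the chain enters a cycle of $\bar f_i$ and stays in it afterwards; in particular $\bar c_{i,j}$ is $\bar f_i$-periodic. There are two cases. If $c_{i,j}$ is fixed by $f_i$, the cycle through $\bar c_{i,j}$ is $\{\bar c_{i,j}\}$, so the last step of the chain gives $\overline{f_i^{\mathcal N-1}(x)}=\bar c_{i,j}$. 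But $f_i^{\mathcal N-1}(x)\in f_i^{-1}(c_{i,j})\setminus\{c_{i,j}\}$, and this contradicts distinct reduction of the points of $f_i^{-1}(c_{i,j})$. If $c_{i,j}$ is not fixed, then by the normalization it is not periodic in characteristic $0$, and the contradiction must come from $\mathfrak p$ having been chosen so that $\bar c_{i,j}$ is \emph{not} periodic for $\bar f_i$.

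If $c_{i,j}$ is strictly preperiodic, this choice is automatic once its finite forward orbit has distinct reductions, since the reduction is then still strictly preperiodic; this is a finite condition on $\mathfrak p$. The main obstacle is the case where $c_{i,j}$ is wandering, because a wandering point can become periodic modulo many primes. My first attempt is as follows. Fix a strictly preperiodic point $w$ of $f_i$, for instance a non-fixed preimage of a fixed point whose own preimages avoid the cycle, and find a prime $\mathfrak p$ at which $\overline{f_i^k(c_{i,j})}=\bar w$ for some $k$. Good reduction at $\mathfrak p$ keeps $\bar w$ strictly preperiodic, and hence $\bar c_{i,j}$ is not periodic. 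Such a prime exists if the orbit of $c_{i,j}$ is not contained in the $S$-integral points relative to $w$ for every finite $S$. This is the content of Silverman's theorem on integral points in orbits, applied after replacing $w$ by a point of $f_i^{-2}(w)$ if necessary, so that $f_i^2$ is not totally ramified over it.

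The remaining difficulty is that this has to be done simultaneously for all wandering $c_{i,j}$ with a single prime $\mathfrak p$. I would handle this by passing to a finite extension $K'$ in which the relevant primes split, using Chebotarev density, or by running the integral-points argument with a finite set of targets at once. The Frobenius $\gamma$ is then taken in $\mathrm{Gal}(\overline K/K')\subseteq\mathrm{Gal}(\overline K/K)$. Finally, $\mathcal N$ is chosen after $q$ is known, and the finitely many distinct-reduction conditions for that $\mathcal N$ are imposed by moving $\mathfrak p$ within the positive-density family produced by Chebotarev. This completes the plan.
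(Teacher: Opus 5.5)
Your Frobenius-and-pigeonhole reduction is sound as far as it goes. If $\gamma$ induces $t\mapsto t^q$ on the residue field and $\mathcal N>q+1$, then a $\gamma$-fixed point of $S_{i,j}$ forces $\bar c_{i,j}$ to be $\bar f_i$-periodic, and the fixed case closes as you say. The gap is the existence of \emph{one} prime $\mathfrak p$ at which every non-periodic critical point $c_{i,j}$, for all $i$ and $j$ at once, has non-periodic reduction.

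Silverman's integrality theorem handles one wandering $c_{i,j}$ at a time, giving an infinite set of primes $P_{i,j}$. Nothing makes $\bigcap_{i,j}P_{i,j}$ nonempty. Splitting primes in an extension $K'$ does not make these sets meet. An integral-points theorem for a union of targets only yields primes where \emph{some} orbit hits its target, not all of them.

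This is not a technicality. By Chebotarev, a positive-density set of such simultaneous primes is essentially equivalent to the proposition itself: it amounts to a single Galois element being a derangement on all the sets $f_i^{-\mathcal N}(c_{i,j})$ simultaneously. Separate derangements need not combine; the Klein four-group acting on the cosets of its three subgroups of order $2$ has no common derangement. The problem already arises for a single $f$ with two wandering critical points, so the proposal defers exactly the hard part.

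The paper never tries to choose such a prime. It works at an arbitrary prime of good reduction with residue characteristic $>\prod d_{i,j}$ and allows $\bar c_{i,j}$ to be periodic. In that case the first-return map $g_{i,j}=f_i^{\circ p_{i,j}}$ on the residue disk of $c_{i,j}$ has Weierstrass degree $>1$, hence an attracting fixed point. Deep $g_{i,j}$-preimages of $c_{i,j}$ are then tamely ramified over $K_{\mathfrak p}$, so a tame inertia generator $\tau$ moves them (the $p$-adic proposition in Section 2). Backward branches that leave the periodic disks pass through many distinct strictly preperiodic residue classes. They therefore acquire large residue degree and are moved by a Frobenius power $\sigma^f$; this step is your pigeonhole. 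The element $\gamma=\tau\sigma^f$ treats all $(i,j)$ at a single prime, and this use of inertia is the idea your plan is missing.

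A smaller point: your last step, moving $\mathfrak p$ to meet distinct-reduction conditions that depend on $\mathcal N$, is circular, because changing $\mathfrak p$ changes $q$ and hence the required $\mathcal N$. Those conditions are also unnecessary. Any decomposition-group element inducing $t\mapsto t^q$ fixes no point whose reduction is not $\mathbb{F}_q$-rational, so you can fix $\mathfrak p$ first and then take $\mathcal N>q+1$.
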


While statistical  on orbits in higher dimensions provides heuristics for unlikely avoiding ramification via random models\cite{BGHKST13},
Our proposition predicts that a given point will eventually be mapped into the \'{e}tale locus under iteration of reduction map for a set of primes of positive density:

\begin{proposition}
    For any finite set of rational functions $f_1,\cdots,f_n$ of \mbox{} $\mathbb{P}^1$ defined over some number field $K$, any point $x=(x_1,\cdots, x_n)\in (\mathbb{P}^1)^n(\overline{K})$, there is some finite extension $L/K$, a set of primes $\Pi$ of positive density of $\mathcal{O}_L$ and some $m_0$ s.t. for any $\mathfrak{p}\in \Pi$, $F=f_1\times\cdots\times f_n$ has good reduction \text{mod} $\mathfrak{p}$ and the forward orbit  $\{ F_\mathfrak{p}^{\circ (m+m_0)}(\overline{x_1},\cdots, \overline{x_n})\ | \ m=0,1,\cdots , \}$ does not contains any critical points of $F_\mathfrak{p}$ , unless some $x_i$ lies in the pre-image of some fixed critical point of $f_i$.
\end{proposition}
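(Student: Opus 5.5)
We will derive the statement from Proposition \ref{prop1} by a Chebotarev density argument, following the ``almost PCF'' strategy of \cite{BGT16}. First we reduce to a convenient setting. Replace $K$ by a finite extension $L$ containing all critical points $c_{i,j}$ of all $f_i$ and the coordinates $x_i$. Replace $F$ by an iterate $F^{k}$ so that every periodic critical point of every $f_i$ is fixed. It suffices to prove the claim for $F^k$ and the finitely many starting points $F^r(x)$, $0\le r<k$, since the orbit of $x$ under $F$ is the union of these $F^k$-orbits. Now apply Proposition \ref{prop1} to obtain $\mathcal{N}$, the Galois-invariant sets $S_{i,j}$, and an element $\gamma\in \text{Gal}(\overline{L}/L)$ acting fixed-point freely on each $S_{i,j}$. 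Let $M/L$ be a finite Galois extension over which all points of all $S_{i,j}$ are defined. Chebotarev gives a set $\Pi$ of primes $\mathfrak{p}$ of $\mathcal{O}_L$ of positive density whose Frobenius conjugacy class in $\text{Gal}(M/L)$ is that of $\gamma|_M$. Removing finitely many primes, we may also assume the following: each $f_i$ has good reduction at $\mathfrak{p}$; the points of $\bigcup S_{i,j}$, the critical points, and the $x_i$ remain distinct modulo $\mathfrak{p}$ whenever they are distinct; and critical points reduce to critical points, with no new critical points appearing mod $\mathfrak{p}$.

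The key step is the following observation. For $\mathfrak{p}\in\Pi$, no point of $S_{i,j}$ reduces to an $L$-rational point mod $\mathfrak{p}$, i.e.\ to a point of $\mathbb{P}^1(k_\mathfrak{p})$, where $k_\mathfrak{p}$ is the residue field. Indeed, Frobenius acts on the reductions of $S_{i,j}$ via $\gamma$, which has no fixed points there, and reduction is injective on $S_{i,j}$. Moreover, the orbit of $\overline{x_i}$ under $f_{i,\mathfrak{p}}$ stays in $\mathbb{P}^1(k_\mathfrak{p})$, since $x_i\in\mathbb{P}^1(L)$.

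Now suppose $f_{i,\mathfrak{p}}^{m}(\overline{x_i})=\overline{c_{i,j}}$ for some $m\ge m_0$, where we take $m_0=\mathcal{N}$. There are two cases.
\begin{itemize}
\item If $c_{i,j}$ is not fixed, then $f_{i,\mathfrak{p}}^{m-\mathcal{N}}(\overline{x_i})$ is a $k_\mathfrak{p}$-rational point lying in $f_{i,\mathfrak{p}}^{-\mathcal{N}}(\overline{c_{i,j}})$. We need this fiber to equal the reduction of $S_{i,j}=f_i^{-\mathcal{N}}(c_{i,j})$, which follows from good reduction and the fact that preimages reduce to preimages with multiplicity. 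This contradicts the observation above.
\item If $c_{i,j}$ is fixed, follow the orbit backward. Either the orbit point one step before each hit also reduces to $\overline{c_{i,j}}$, or at some stage it lands in $f_{i,\mathfrak{p}}^{-1}(\overline{c_{i,j}})\setminus\{\overline{c_{i,j}}\}$ and then, $\mathcal{N}-1$ steps earlier, in the reduction of $S_{i,j}$, giving the same contradiction. So if $m$ is large, the orbit of $\overline{x_i}$ must have entered $\overline{c_{i,j}}$ within the first $\mathcal{N}$ steps, via the reduction of the backward orbit.
\end{itemize}
The finitely many ``early entry'' possibilities must be handled by enlarging the excluded finite set of primes. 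Here we use that if $x_i$ is not in the preimage of the fixed critical point $c_{i,j}$, then the points $f_i^{r}(x_i)$ for $r\le\mathcal{N}$ differ from $c_{i,j}$, so they are congruent to it at only finitely many $\mathfrak{p}$. Since a critical point of $F_\mathfrak{p}$ is a point with some coordinate critical for $f_{i,\mathfrak{p}}$, taking $m_0$ to be $\mathcal{N}$ plus $k$ times the iterate shift finishes the argument.

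The main obstacle is the compatibility of reduction with the backward orbit. We must ensure that for $\mathfrak{p}\in\Pi$ the full fiber $f_{i,\mathfrak{p}}^{-\mathcal{N}}(\overline{c})$ is exactly the reduction of $S_{i,j}$ (together with $\overline{c}$ in the fixed case), with no extra $k_\mathfrak{p}$-points created by collisions. We would handle this by discarding the finitely many primes dividing discriminants and differences of points in $f_i^{-\mathcal{N}}(c_{i,j})$, and by checking degrees: good reduction preserves $\deg f_i^{\mathcal{N}}$, so the preimage divisor reduces to the preimage divisor.
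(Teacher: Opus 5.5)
Your proposal is correct and follows essentially the same route as the paper. Both combine Proposition \ref{prop1} with the Chebotarev lemma (a Frobenius acting like $\gamma$ forces $\overline{S_{i,j}}\cap\mathbb{P}^1(k_\mathfrak{p})=\emptyset$) and the $k_\mathfrak{p}$-rationality of the reduced orbit of $x$, then run the same two-case backward argument: directly through $S_{i,j}$ for non-fixed $c_{i,j}$, through $f_i^{-1}(c_{i,j})\setminus\{c_{i,j}\}$ for fixed ones, with finitely many primes discarded to rule out early entries.

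Two bookkeeping points. Enlarge the field after passing to $F^k$, so that it contains the critical points of the $f_i^k$, which Proposition \ref{prop1} requires. And the reduction back to $F$ uses $\text{Crit}(F_\mathfrak{p})\subseteq\text{Crit}(F^k_\mathfrak{p})$.
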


We can deduce the dynamical Mordell-Lang conjecture for split endomorphism and the distribution of attracting periodic points for rational functions over number field from the proposition:

\begin{restatable}{theorem}{mainthm} 
    \label{mainthmone}
    For any endomorphism $F=f_1\times\cdots \times f_m$ on $X=(\mathbb{P}^1)^n$ defined over $\mathbb{C}$ , any subvariety $Z\subseteq X $. If there is a point $x=(x_1,\cdots,x_n) \in X(\mathbb{C})$ with $|O_f(x)\ \cap \ Z|=\infty$, then $T=\{ m \ |  \ f^m(x)\in  O_f(x)\ \cap \ Z \}$ is a finite union of arithmetic progression with a finite set of points.
\end{restatable}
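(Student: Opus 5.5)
We follow the $p$-adic interpolation strategy of \cite{BGT10} in the ``almost PCF'' form of \cite{BGT16}, using the preceding proposition to reach the étale locus modulo a suitable prime.

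\textbf{Step 1: reduce to a number field.} The data $F$, $Z$ and $x$ involve only finitely many coefficients, so they are defined over a finitely generated field $K_0\subset\mathbb{C}$. We use the standard specialization argument of \cite{BGT16} to choose an embedding of $K_0$ into $\mathbb{C}_p$ for a suitable prime $p$. If one prefers to work with number fields, we instead specialize to a number field so that the return set $T$ is preserved. In that case we also replace $F$ by $F^{N}$ for a suitable $N$ so that every periodic critical point of each $f_i$ is fixed, as in Proposition~\ref{prop1}. It suffices to treat each residue class $T\cap(r+N\mathbb{N})$ separately.

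\textbf{Step 2: split off the coordinates that cannot be made étale.} Let $I$ be the set of indices $i$ such that $x_i$ lies in the backward orbit of a fixed critical point of $f_i$. For $i\in I$, the coordinate $F^m(x)_i$ is eventually constant, equal to that fixed point $c_i$. Replacing $x$ by $F^{m_1}(x)$ for $m_1$ large, we may therefore assume $x_i=c_i$ is fixed for every $i\in I$. We then intersect $Z$ with the slice $\{y_i=c_i,\ i\in I\}$. This slice is invariant under $F$, so the problem reduces to the split map $\prod_{i\notin I}f_i$ acting on the remaining coordinates. Discarding finitely many initial times only adds a finite set of points to $T$.

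\textbf{Step 3: $p$-adic analytic parametrization.} We now apply the preceding proposition to the reduced system. It gives a finite extension $L$, a prime $\mathfrak{p}$ of good reduction, and an $m_0$ such that the reduced orbit of $F^{m_0}(x)$ never meets the critical locus of $F_\mathfrak{p}$. Since the residue field is finite, this reduced orbit is preperiodic, say with period $k$ after a preperiod. On each residue disc visited by $F^{m_0+j}(x)$, the map $F^k$ restricts to an étale map of polydiscs sending the disc to itself; it has no critical points there. By the \cite{BGT10} interpolation lemma, after passing to a further iterate $F^{kM}$, there is a $\mathfrak{p}$-adic analytic map $G_j$ on $\mathbb{Z}_p$ with $G_j(\ell)=F^{m_0+j+kM\ell}(x)$ for all $\ell\in\mathbb{N}$. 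For each local equation $h$ of $Z$, the function $h\circ G_j$ is analytic on $\mathbb{Z}_p$. Such a function either vanishes identically or has only finitely many zeros. Hence each class $\{m_0+j+kM\ell : \ell\in\mathbb{N}\}$ meets $T$ either in the whole progression or in a finite set. This gives exactly the claimed form of $T$.

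\textbf{Main obstacle.} The difficult part is the preceding proposition, namely finding, with positive density, primes at which the orbit avoids the critical points. That is where the Galois element $\gamma$ of Proposition~\ref{prop1} acts fixed-point freely on the sets $S_{i,j}$, by Chebotarev. Taking that proposition as given, the main remaining care is in Step 1, because the specialization from $\mathbb{C}$ must preserve the hypotheses, for example that $x_i$ does not lie in the backward orbit of a fixed critical point. I would handle this by embedding the finitely generated field directly into $\mathbb{C}_p$, as in \cite{BGT10}. The price is that the preceding proposition must then be applied over that field, not a number field. If this is problematic, a fallback is to specialize to $\overline{\mathbb{Q}}$ outside a countable union of proper closed sets, which keeps the exceptional-preimage conditions unchanged.
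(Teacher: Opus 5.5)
Your Steps 2 and 3 are sound and match the paper's number-field argument. The paper phrases the interpolation step through its proposition producing subvarieties in which the orbit is generic, but the content is the same. The gap is Step 1: you flag it but do not close it, and neither of your two options works as stated.

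The proposition on orbits avoiding the critical locus is only available over number fields, because its proof is Chebotarev applied to the number field extension $K(\{S_{i,j}\})/K$. So you cannot simply ``apply it over'' a finitely generated field. Moreover, an arbitrary embedding of $K_0$ into $\mathbb{C}_p$ from the BGT embedding lemma gives you no control over whether the reduced orbit meets the critical locus.

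Specializing to a $\overline{\mathbb{Q}}$-point $c$ of a model $\mathfrak{B}$ of $K_0$ fails for a different reason. At a good specialization you only have $T\subseteq T_c$. Knowing that $T_c$ is a finite union of progressions says nothing about $T$ unless $T_c=T$, i.e.\ unless $c$ avoids the countably many proper closed sets $\{c : F_c^m(x_c)\in Z_c\}$ for $m\notin T$. Since $\mathfrak{B}(\overline{\mathbb{Q}})$ is itself countable, there is no reason such a $c$ exists; a Baire-type argument needs an uncountable complete field.

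The missing idea, which is how the paper proceeds, is to use the specialization only to choose the prime and then lift back $\mathfrak{p}$-adically. Take $c_0\in\mathfrak{B}(\overline{K})$ where the family is non-degenerate. Apply the proposition to $(F_{c_0},x_{c_0})$ over the number field $L=K(c_0)$ to get $\mathfrak{p}$ and $m_0$. Then pick $c'\in\mathfrak{B}(L_\mathfrak{p})$ in the $\mathfrak{p}$-adic residue polydisc of $c_0$ lying outside every proper subvariety of $\mathfrak{B}$ defined over $K$. Such a $c'$ exists by Baire: there are only countably many such subvarieties, and each meets the polydisc in a closed nowhere dense set.

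Evaluation at $c'$ is then a field embedding $K_0\hookrightarrow L_\mathfrak{p}$, so $T$ is unchanged. The reduction of $(F,x)$ under this embedding equals the reduction of $(F_{c_0},x_{c_0})$. Hence the reduced orbit avoids the critical locus after $m_0$, and your Step 3 applies verbatim to the original system.

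There is one further requirement. Because the lifted system has the same reduction as the specialization, you must choose $c_0$ so that no $x_i(c_0)$ with $i\notin I$ falls into the backward orbit of a periodic critical point of $f_i(c_0)$. For instance, non-preperiodic coordinates must stay non-preperiodic, which calls for a height or specialization argument, since $c_0$ must be algebraic. The genericity of $c'$ cannot repair a bad $c_0$; the paper addresses this when choosing its base parameter.
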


\begin{restatable}{corollary}{cor1}
    \label{coro1}
    For a rational function $f$ defined over some number field $K$, there is some finite extension $L/K$ and a set of primes $P$ of $L$ of positive density, s.t. any periodic point of $f$ is either $\mathfrak{p}$-indifferent or supper-attracting for any prime $\mathfrak{p}\in P$.

\end{restatable}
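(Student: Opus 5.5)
Apply the preceding proposition on étale reduction with $n=1$, $f_1=f$, to reduce Corollary \ref{coro1} to a statement about multipliers modulo $\mathfrak{p}$. For a periodic point $z$ of exact period $k$ the multiplier is
\[
\lambda(z)=(f^k)'(z)=\prod_{j=0}^{k-1} f'(f^j(z)),
\]
computed in a local coordinate. Let $L$, $\Pi$ and $m_0$ be as in that proposition, and take $P=\Pi$. Fix $\mathfrak{p}\in P$. Because $f$ has good reduction at $\mathfrak{p}$, $z$ is $\mathfrak{p}$-integral over $\mathcal{O}_{L,\mathfrak{p}}$ after a suitable choice of coordinate, and the reduction $\overline{z}$ is periodic for $f_\mathfrak{p}$. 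The multiplier then reduces to
\[
\overline{\lambda(z)}=\prod_{j} f_\mathfrak{p}'(f_\mathfrak{p}^j(\overline z)).
\]
Consequently $|\lambda(z)|_\mathfrak{p}<1$, the attracting case, holds exactly when some $f_\mathfrak{p}^j(\overline z)$ is a critical point of $f_\mathfrak{p}$. Otherwise $|\lambda(z)|_\mathfrak{p}=1$ and $z$ is $\mathfrak{p}$-indifferent. Repelling points cannot occur under good reduction, since $\lambda$ is $\mathfrak{p}$-integral.

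Suppose now that $z$ is attracting at $\mathfrak{p}$, so the periodic cycle of $\overline z$ meets $\mathrm{Crit}(f_\mathfrak{p})$. Since $\overline z$ is periodic, the cycle lies inside the forward orbit $\{f_\mathfrak{p}^{m+m_0}(\overline z)\}$ for every $m_0$. The proposition, applied to the point $x=z$, forbids this orbit from hitting a critical point unless $z$ lies in the preimage of a fixed critical point of $f$. Recall that we have replaced $f$ by an iterate, so that every periodic critical point is fixed. A periodic $z$ in the backward orbit of a fixed critical point $c$ must equal $c$, because $c$ is fixed and $z$ is periodic. So $z=c$ is a fixed critical point, hence superattracting. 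Passing to an iterate does not change which points are periodic, nor whether they are indifferent or (super)attracting, because multipliers are simply raised to powers.

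The main obstacle is uniformity. The proposition as stated gives $L$, $\Pi$ and $m_0$ depending on the point $x$, whereas the corollary needs one $P$ that works for all of the infinitely many periodic points simultaneously. I would handle this by going back to Proposition \ref{prop1}. Its construction depends only on $f$: the sets $S_{j}$ and the fixed-point-free Galois element $\gamma$. Chebotarev applied to $\gamma$ gives the density-positive set of primes $\mathfrak{p}$ whose Frobenius acts like $\gamma$. For such $\mathfrak{p}$ no point of $S_j$ is defined over the residue field $k_\mathfrak{p}$. Since $f_\mathfrak{p}$ permutes its periodic points over $k_\mathfrak{p}$-rational preimages appropriately, a periodic cycle through $\overline{c_j}$ with $c_j$ non-fixed would force a point of $f_\mathfrak{p}^{-\mathcal N}(\overline{c_j})$ in the cycle rational over the field of definition of the cycle. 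I expect this step to need care. Periodic points are not $k_\mathfrak{p}$-rational in general, only rational over extensions of $k_\mathfrak{p}$, so I would instead argue that Frobenius acts compatibly along the cycle, giving an $f^{\mathcal N}$-preimage of $\overline{c_j}$ fixed by the relevant Frobenius power. I would then check this against the fixed-point-freeness of $\gamma$ on $S_j$, possibly refining $P$ by requiring that all powers of the Frobenius class act freely.
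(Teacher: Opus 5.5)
There is a genuine gap, and it sits exactly at the step you flag yourself. Your multiplier reduction is fine and matches the paper: $z$ is attracting at $\mathfrak{p}$ if and only if the reduced cycle of $\overline{z}$ meets $\mathrm{Crit}(f_\mathfrak{p})=\overline{\mathrm{Crit}(f)}$. The trouble is uniformity. Applying the proposition to $x=z$ forces $L\supseteq K(z)$ and makes $\Pi$ and $m_0$ depend on $z$, so it cannot produce one $P$ for all periodic points. Your proposed repair also fails. You cannot require that all powers of the Frobenius class act freely on $S_j$: $\mathrm{Frob}_\mathfrak{p}$ has finite order $o$ in $\mathrm{Gal}(K(S)/K)$, so $\mathrm{Frob}_\mathfrak{p}^{\,o}$ fixes every point of $S_j$. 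Since residue degrees of cycles of $f_\mathfrak{p}$ are unbounded, you also cannot restrict attention to a safe set of powers.

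The missing observation is that the cycles that matter are automatically $k_\mathfrak{p}$-rational, whatever field $z$ is defined over. If $\overline{c_j}=f_\mathfrak{p}^{i}(\overline{z})$, the reduced cycle of $\overline{z}$ is the forward orbit of $\overline{c_j}$. That orbit lies in $\mathbb{P}^1(k_\mathfrak{p})$, because $c_j\in K$ and $f_\mathfrak{p}$ is defined over $k_\mathfrak{p}$. So the uniform input you need concerns only the critical orbits, and this is how the paper proceeds.

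The paper applies the proposition once, to the $K$-rational point $x$ of $(\mathbb{P}^1)^E$ whose coordinates are the critical points $c_j$ not in the backward orbit of a fixed critical point, under $F=f^{\times E}$. This gives a single $P$ and $n$ with $f_\mathfrak{p}^m(\overline{c_j})\notin\mathrm{Crit}(f_\mathfrak{p})$ for all $m>n$ and all $\mathfrak{p}\in P$. An attracting $z$ would put such a $\overline{c_j}$ on a cycle. Then $f_\mathfrak{p}^m(\overline{c_j})=\overline{c_j}$ for arbitrarily large $m$, which is a contradiction. In the remaining case, $\overline{z}$ is the reduction of a fixed critical point $c$. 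The contraction $|f(t)-c|_\mathfrak{p}\le|t-c|_\mathfrak{p}^2$ on the residue disk of $c$ then forces $z=c$, which is superattracting.

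In your Chebotarev language the same argument reads as follows. Step back $\mathcal{N}$ steps along the $k_\mathfrak{p}$-rational cycle through $\overline{c_j}$. This yields a $k_\mathfrak{p}$-rational point of $f_\mathfrak{p}^{-\mathcal{N}}(\overline{c_j})=\overline{S_j}$ (Lemma 2.1), contradicting $\overline{S_j}\cap\mathbb{P}^1(k_\mathfrak{p})=\emptyset$ from Lemma 3.1. No powers of Frobenius are needed.
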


This can be viewed as a generalization of the fact that the multipliers of a PCF functions are $S$-unit for a finite set of primes $S$.  

Further more, by explicit analysis we can give an effective version of the conjecture, which tell us the period of the subvariety can be bounded explicitly in terms of the information of the base field and Weil height of the endomorphism:

\begin{restatable}{corollary}{cor2}
    \label{cortwo}
    For rational functions $f_1,\cdots, f_n/K$ with set of critical points $c_{i,j}\in K \ , \ i=1,\cdots ,n \ , \ j=1,\cdots , t_i$ and $x=(x_1,\cdots,x_n)\in K^n$ for a number field $K$. The upper bound of the period of \\  $\cap_{m\ge 0}\overline{O_F(f^m(x))}$ under $F=f_1\times\cdots\times f_n$ is effective computable from $[K:\mathbb{Q}],d_i,h_{M_{d_i}}(f_i)$. In particular, for any subvariety $S\subseteq (\mathbb{P}^1)^n$, if $O_F(x)\cap S$ is an infinite set, then the starting number and the common difference of $\{ m\ | \ F^m(x)\in S , m\in \mathbb{N} \}$ are effective bounded above.

\end{restatable}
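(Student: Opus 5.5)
The plan is to make effective each step of the argument that derives Theorem~\ref{mainthmone} from Proposition~\ref{prop1}. That argument runs through the $\mathfrak{p}$-adic interpolation method of \cite{BGT10}, in the ``almost PCF'' form of \cite{BGT16}. The period of $\cap_{m\ge 0}\overline{O_F(F^m(x))}$ and the common difference of $\{m : F^m(x)\in S\}$ are both bounded by the same quantity. Namely, if $\mathfrak{p}$ is a prime of good reduction for which the orbit of $\bar x$ eventually avoids the critical locus of $F_\mathfrak{p}$, then $\bar x$ is preperiodic in $(\mathbb{P}^1)^n(\mathcal{O}_L/\mathfrak{p})$. Its eventual period $\ell$ and preperiod $m_0$ are at most $\#(\mathbb{P}^1(\mathbb{F}_q))^n=(q+1)^n$, where $q=N\mathfrak{p}$. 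The interpolation lemma then gives, for each residue class $r \bmod \ell$ with $r\ge m_0$, a $\mathfrak{p}$-adic analytic map $\theta_r$ with $F^{r+k\ell'}(x)=\theta_r(k)$, where $\ell'=\ell\cdot p^{e}$ with $e$ depending only on $[K_\mathfrak{p}:\mathbb{Q}_p]$. So the common difference divides $\ell'$ and the starting number is at most $m_0$. It therefore suffices to bound $N\mathfrak{p}$ for a single prime $\mathfrak{p}\in\Pi$ effectively in terms of $[K:\mathbb{Q}]$, $d_i$ and $h_{M_{d_i}}(f_i)$.

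The first step bounds the bad primes. The primes of bad reduction of $f_i$ divide the resultant of its coefficients, whose norm is bounded by an explicit function of $d_i$, $[K:\mathbb{Q}]$ and $h_{M_{d_i}}(f_i)$. The same holds for primes where distinct critical points, or the points $x_i$, collide modulo $\mathfrak{p}$, since the $c_{i,j},x_i\in K$ have heights controlled by $h(f_i)$ and $h(x_i)$. The second step makes Proposition~\ref{prop1} effective. The iterate needed to make periodic critical points fixed is bounded by $\mathrm{lcm}$ of periods at most $2d_i-2$ (Northcott plus the finiteness of critical points gives an effective preperiod/period bound in terms of height). The number $\mathcal{N}$ produced in the proposition should be bounded in terms of $d_i$ and $t_i$. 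Consequently the splitting field $M$ of all the sets $S_{i,j}$ has degree at most $\prod_{i,j}(\#S_{i,j})!\le \prod_i (d_i^{\mathcal{N}}!)^{t_i}$. Its discriminant is bounded via the heights of the points of $S_{i,j}$, which are bounded by $h(c_{i,j})+O(h(f_i))$ through standard height estimates for preimages.

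The third step applies the effective Chebotarev density theorem of Lagarias--Odlyzko, in its unconditional form with an explicit (Siegel-zero-aware) bound. This produces a prime $\mathfrak{p}$ of $K$ whose Frobenius lies in the conjugacy class of the fixed-point-free element $\gamma$. We need $\mathfrak{p}$ to avoid the finite bad set from step one, and its norm is bounded explicitly by $\mathrm{disc}(M)$ and $[M:\mathbb{Q}]$. For such $\mathfrak{p}$, no point of $S_{i,j}$ is defined over the residue field, so the reduced orbit cannot hit $c_{i,j}$ after $\mathcal{N}$ steps unless it sits in the fixed critical point. The exceptional case is precisely when $x_i$ is preperiodic to a fixed critical point, which is handled separately: there the orbit is eventually constant in that coordinate, with effectively bounded preperiod. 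Combining the three steps bounds $N\mathfrak{p}$, hence $\ell'$ and $m_0$. The result applies to every subvariety $S$ at once, since any infinite intersection then becomes a union of full residue classes mod $\ell'$ beyond $m_0$, by the identity principle applied to $\theta_r$.

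The main obstacle is the second step, namely making $\mathcal{N}$ and the heights and discriminant of the field $M$ genuinely explicit. Proposition~\ref{prop1} is presumably proved by a group-theoretic argument on the iterated monodromy or arboreal Galois action (the existence of fixed-point-free elements via a Jordan-type lemma). Extracting a bound on $\mathcal{N}$ from that argument requires tracking how deep one must go before the tree action forces a derangement. My first attempt would be to show that $\mathcal{N}\le t_i+1$ suffices, by an induction on the number of non-fixed critical points that uses the branching of preimage trees above non-postcritical points.
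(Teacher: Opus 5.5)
Your overall plan matches the paper's. You discard primes dividing the resultants or with small residue characteristic, use Lagarias--Odlyzko to find a prime $\mathfrak{p}_0$ whose Frobenius lies in the class of the fixed-point-free $\gamma$ of Proposition~\ref{prop1}, and read the period off $N\mathfrak{p}_0$ by $p$-adic interpolation. The gap is the step that carries the whole corollary, an effective bound on $\mathcal{N}$: it is missing, and the bound you propose is false.

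Any $K$-rational point of $f_i^{-\mathcal{N}}(c_{i,j})$ is fixed by every Galois element. So a bound in terms of $d_i,t_i$ alone cannot survive base change. Take $f_i$ with a non-fixed critical point $c_{i,j}$ and pass to $K'=K(f_i^{-(t_i+1)}(c_{i,j}))$, which still contains all critical points. Over $K'$, every point of $f_i^{-\mathcal{N}}(c_{i,j})$ with $\mathcal{N}\le t_i+1$ is $K'$-rational, so $\mathcal{N}\le t_i+1$ fails there. Even allowing dependence on $[K:\mathbb{Q}]$, a height-free bound would give an effective uniform bound on the depth of $K$-rational backward orbits of non-periodic critical points, which is not available.

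Your guess at the mechanism is also off. Proposition~\ref{prop1} is not proved by a Jordan-type lemma, and such a lemma could not prove it. The set $\bigcup_{i,j}S_{i,j}$ is usually a union of several Galois orbits, and a group can have a derangement on each orbit without having a simultaneous one. For example, $\mathrm{Gal}(\mathbb{Q}(\sqrt2,\sqrt3)/\mathbb{Q})$ acting on $\{\pm\sqrt2,\pm\sqrt3,\pm\sqrt6\}$ has no element moving all six points.

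The paper instead makes the \emph{local} proof of Proposition~\ref{prop1} quantitative. There, $\gamma$ comes from $\tau\sigma^f$ in the decomposition group at an auxiliary prime $\mathfrak{p}$ of good, tame reduction, whose norm is effectively bounded via the resultants.
\begin{itemize}
\item The tame inertia generator $\tau$ moves every deep preimage of $c_{i,j}$ inside an attracting residue disk, once those preimages have valuation $<1$. This is the Newton-polygon argument of the local proposition in \S2.
\item The power $\sigma^f$ moves the deep preimages along strictly preperiodic residue branches, whose residue degrees grow.
\end{itemize}
This gives $\mathcal{N}=2Nmp+1$, with $p$ the product of the reduced periods. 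Both $p$ and $N$ are bounded in terms of $\#k_\mathfrak{p}$, $d_i$ and $m$.

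The real issue is $m$, the number of backward steps needed inside an attracting disk. It is governed by the $\mathfrak{p}$-adic distance from $c$ to the attracting fixed point $x_0$ of $f^n$. The paper notes that $v_\mathfrak{p}(c-x_0)=v_\mathfrak{p}(f^n(c)-c)$ and bounds the right side by $[K:\mathbb{Q}]\,h(f^n(c)-c)$. For this it uses $h(f^n(c))\le d^n h_{\mathrm{crit}}(f)+C(h_{M_d}(f))$ together with Ingram's $h_{\mathrm{crit}}\asymp h_{M_d}$.

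That is exactly where $h_{M_{d_i}}(f_i)$ enters, and it is what your second step needs. With $\mathcal{N}$ bounded this way, your bounds on $[M:\mathbb{Q}]$ and $\mathrm{disc}(M)$ and your Chebotarev step go through as you describe.

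A minor point about your first step: before taking $p$-power iterates in the interpolation lemma, you also need an iterate that kills the multipliers at the periodic residue point. This contributes a factor dividing $q(q-1)$ and does not affect effectivity.
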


Our method also gives some partial result towards the 'zero density conjecture':

\begin{definition}
    For a rational function $f\in K(x)$ and any $x\in \mathbb{P}^1(K)$ for a number field $K$, define
    \begin{align*}
        G_n(f,x) & :=\text{Gal}(K(f^{-n}(x)/K) \ , \ G_\infty(f,x):= \text{Gal}(\cup_{n\in \mathbb{N}}K(f^{-n}(x))/K) \\
        &=\lim_{n\rightarrow \infty} G_n(f,x).
    \end{align*}
\end{definition}

\begin{conjecture}
    (zero density) If $x$ is not periodic under $f$, then we have the density
    \begin{equation*}
        \rho(f,x):=\lim_{n\rightarrow \infty} \frac{\#\{ \sigma\in G_n(f,x) : \sigma \ \text{fixes at least one point in}\ f^{-n}(x) \}}{\# G_n(f,x)}
    \end{equation*}
    equals to $0$.
\end{conjecture}
The limit exists because of the martingale property associated to the random process as in the proof of the below corollary of proposition \ref{prop1} : 

\begin{restatable}{corollary}{cor3}

    For a rational function $f/K$, $x\in K$ for a number field $K$, if $x$ is not periodic under $f$, then there is an effective computable number $0<M(K,f)<1$, s.t. 
    \begin{equation*}
        \rho(f,x)\le M(K,f)< 1.
    \end{equation*}

\end{restatable}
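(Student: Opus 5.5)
The plan is to apply Proposition \ref{prop1} to the single map $f$ and read off an upper bound for $\rho(f,x)$ via Chebotarev. Replace $f$ by an iterate $f^k$ so that every periodic critical point is fixed; this changes $G_n$ only by passing to a cofinal subsequence, and $\rho$ is the limit of a monotone sequence. Proposition \ref{prop1} then supplies $\mathcal N$, the Galois-stable sets $S_j$ attached to the critical points $c_j$, and an element $\gamma\in\mathrm{Gal}(\overline K/K)$ acting without fixed points on every $S_j$.

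\textbf{Monotonicity and existence of the limit.} Let $\rho_n$ be the proportion of $\sigma\in G_n$ fixing some point of $f^{-n}(x)$. If $\sigma$ fixes $y\in f^{-(n+1)}(x)$, then it fixes $f(y)\in f^{-n}(x)$. Since restriction $G_{n+1}\to G_n$ is surjective with fibers of equal size, $\rho_{n+1}\le\rho_n$. So the limit exists, and it suffices to bound $\rho_n$ for a single large $n$.

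\textbf{Transporting the fixed-point-free element into $G_n$.} Because $x$ is not periodic, the backward tree of $x$ meets the finitely many critical values only at finitely many levels. The exception is the case where $x$ is a preimage of a fixed critical point; that case needs separate handling, via the sets $f^{-1}(c)\setminus\{c\}$ in the definition of $S_{i,j}$.

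Beyond level $n_0$, every branch point along a path to level $n$ passes through some $f^{-\mathcal N}(c_j)$ (or its variant). Now take $\sigma\in G_n$ in the coset $\gamma H$, where $H$ is the subgroup fixing $K(f^{-n_0}(x))\cdot K(\bigcup_j S_j)$ suitably. I would argue that such a $\sigma$ cannot fix all of $f^{-n}(x)$ along the relevant branches. More precisely, the Galois elements of $G_n$ restricting to $\gamma$ on the splitting field of $\bigcup S_j$ form a positive proportion $\delta\ge 1/[K(\bigcup S_j):K]$. I would then show that a positive fraction $\epsilon$ of these fix no point of $f^{-n}(x)$. The mechanism: the fiber structure of $f^{-n}(x)\to f^{-n_0}(x)$ is controlled by ramification, and in the étale part a fixed-point-free action propagates upward. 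This is the random-walk / martingale picture mentioned before the statement.

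Concretely, I would model the number of $\sigma$-fixed points at level $m$ as a branching process. It is a martingale with mean $1$ in the étale (unramified, full-symmetric-like) regime. The constraint from $\gamma$ forces extinction with positive probability at levels where preimages of critical points appear, since $\gamma$ moves every point of $S_j$. This gives $\rho\le 1-\delta\epsilon=:M(K,f)$.

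\textbf{Effectivity and the main obstacle.} Effectivity comes from bounding $\delta,\epsilon$ in terms of $[K:\mathbb Q]$, $d=\deg f$, and $\mathcal N$, the latter being effective by Corollary \ref{cortwo}'s analysis.

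The main obstacle is the extinction step. One must show that fixing $\gamma$ on the finite sets $S_j$ forces, with probability bounded below independently of $n$, that no point of $f^{-n}(x)$ is fixed. I would first attempt this through the reduction picture of the earlier proposition. By Chebotarev, Frobenius at a positive density of primes $\mathfrak p$ acts like $\gamma$. For such $\mathfrak p$, the orbit of $\bar x$ under $f_{\mathfrak p}$ eventually avoids critical points. Then $\mathrm{Frob}_{\mathfrak p}$ having a fixed point in $f^{-n}(x)$ for all $n$ would produce an infinite backward $f_{\mathfrak p}$-chain in the finite set $\mathbb P^1(\mathbb F_{\mathfrak p})$. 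Such a chain must be periodic, so $\bar x$ is periodic mod $\mathfrak p$. I would then use non-periodicity of $x$, together with the avoidance of critical points, to show that this happens only with the density lost, bounded via Chebotarev.
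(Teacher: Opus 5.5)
Your passage to an iterate and your monotonicity argument are correct. If $\sigma$ fixes $y\in f^{-(n+1)}(x)$ it fixes $f(y)$, and $G_{n+1}\to G_n$ has fibres of equal size, so $\rho_{n+1}\le\rho_n$; the paper starts the same way. But monotonicity already tells you what remains. One level $n$ and one $\sigma\in G_n$ with no fixed point on $f^{-n}(x)$ suffice, since then $\rho\le\rho_n\le 1-1/\#G_n$ with $\#G_n\le (d^n)!$, which is effective once $n$ is. That is the paper's route: a single derangement at a computable level, obtained from the local Frobenius/tame-inertia mechanism behind Proposition~\ref{prop1}.

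You never produce such a $\sigma$, and the tools you invoke cannot. The element $\gamma$ of Proposition~\ref{prop1} is only known to move the points of the sets $S_j$, which are deep preimages of critical points. For general $x$ (e.g.\ $x$ outside the grand orbits of the critical points), the tree $\bigcup_n f^{-n}(x)$ never meets $\bigcup_j S_j$, and beyond your level $n_0$ it has no ramification at all. So ``branch points passing through $f^{-\mathcal N}(c_j)$'' has no content, and $\gamma|_{S_j}$ says nothing about fixed points on $f^{-n}(x)$. The mean-one martingale for the number of fixed points presupposes that $\ker(G_{n+1}\to G_n)$ acts transitively on the fibres of $f$. That is a largeness property of the arboreal image which is not available here, and in any case a bound uniform in $n$ is more than you need.

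Your fallback through reduction breaks at its last step. The primes supplied by Chebotarev and the Section~3 proposition are those at which the forward orbit of $\bar x$ eventually avoids critical points. This is fully compatible with $\bar x$ being periodic modulo $\mathfrak p$ in an \'etale cycle---the very situation that proposition is built to produce for the DML application. In that case $f$ maps each residue disk of the cycle bijectively onto the next. Hence $f^{-n}(x)$ has exactly one point in the disks of the cycle; that point is Galois-stable and therefore $K_{\mathfrak p}$-rational. The whole decomposition group, $\mathrm{Frob}_{\mathfrak p}$ included, then fixes a point at every level. So avoidance of critical points gives you no handle on periodicity of $\bar x$.

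What is needed instead is the mechanism of Proposition~\ref{prop1} run on the tree of $x$ itself: a good prime $\mathfrak p$ of large residue characteristic with one of two properties.
\begin{itemize}
\item $\bar x$ is not periodic. Then the sets $\bar f^{-n}(\bar x)$ are pairwise disjoint, so for large $n$ they contain no $k_{\mathfrak p}$-rational point, and any lift of Frobenius is a derangement of $f^{-n}(x)$.
\item $\bar x$ lies in a residue cycle containing a critical point. Then the tame inertia argument of Section~2, combined with a Frobenius power as in Proposition~\ref{prop1}, does the job.
\end{itemize}
For strictly preperiodic $x$, almost every good prime is of the first kind. For wandering $x$, the existence of such a prime is the substantive point, and the paper's brief appeal to Proposition~\ref{prop1} also leaves it implicit. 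It is this---not an extinction estimate---that your argument must supply.
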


\begin{proof}
    By the martingale property of the random process $\{X_n(f,x)\}$ defines on the ${G_n(f,x)}$ :
    \begin{equation*}
        X_n(f,x)(\sigma)=\begin{cases}
            1& \ , \ \text{if}\ \sigma \ \text{has a fixed point on some n-th preimage}; \\
            0& \ , \ \text{otherwise}.
        \end{cases}
    \end{equation*}
    for the map $f:f^{-n}(x)\rightarrow f^{-(n-1)}(x)$, the sequence in limit that defines $\rho(f,x)$ is non-deceasing. Thus if one $X_n(f,x)$ is not a constant $0$ function, then the limit is a positive number. This is ensured by proposition \ref{prop1} and the effectiveness is followed from the computability of step $n$ where a fixed point free Galois action on the $n$-th pre-image appear.
    
\end{proof}

we refer the reader to \cite{J14} for further exposition to the density results associates to arboreal representation and stability of iteration.

{\raggedright\textbf{Acknowledgment}} The author gratefully acknowledge his doctoral advisor Guanyuan Zhang for his introduction to Dynamical Mordell-Lang conjecture and encouragement during the first three years of his doctoral study. Special thanks also due to Prof. Junyi Xie for his insightful comment and suggestion for the early draft of this paper, in particular for the effective version of the conjecture.

\newpage

\section{Local Absolute Galois Group Associated to pre-iamges of Critical Point}

We give a brief introduction to the knowledge of local Galois group and arithmetic dynamics and prove the key theorem in this section. The main reference for this section is \cite{Serre79},\cite{FV02} and \cite{Silverman07}.

\subsection{Residue class field}

For a $\mathfrak{p}$-adic local field $K=K_\mathfrak{p}$, ring of integers $\mathcal{O}_\mathfrak{p}$, unique maximal ideal $\mathfrak{p}$, uniformizer $\pi_K$, denote its residue class field $k_\mathfrak{p}=\mathcal{O}_\mathfrak{p}/\mathfrak{p}$, which has $q=p^f$ many residue classes. The absolute Galois group $\text{Gal}(\overline{k}_\mathfrak{p}/k_\mathfrak{p})=\hat{\mathbb{Z}}$ , is a profinite group topologically generated by the \textbf{Frobenius automotphism} $\phi: x \mapsto x^q$.

\subsection{Maximal unramified extension}

The maximal unramified extension of the $p$-adic local field is by adjoining all roots of unity with order coprime to $p$: $K^{\text{ur}}=K(\{\zeta_n:(n,p)=1\}) $, where $\zeta_n$ is a primative $n$-th roots of unity. The Galois group $\text{Gal}(K^{\text{ur}}/K)=\hat{\mathbb{Z}}=\prod_{l\ \text{prime}}\mathbb{Z}_l$, topologically generated by the Frobenius lift $\sigma$, acting on the roots of unity by $\sigma : \zeta_n \mapsto \zeta^{q}_n$ .
The $\mathbb{Z}_l$ is the ring of $l$-adic integer, which is the inverse limit $\underleftarrow{\lim}_n \mathbb{Z}/l^n\mathbb{Z}$ with canonical projection $p_{ij}: \mathbb{Z}/l^i\mathbb{Z}\longrightarrow \mathbb{Z}/l^j\mathbb{Z}\ , \ i\ge j$. And $\hat{\mathbb{Z}}=\underleftarrow{\lim}_n \mathbb{Z}/n\mathbb{Z}$ is the profinite completion of the ring of integers.

\begin{remark}
    \item The action of the Galois group on units is determined by the \textbf{cyclotomic characteristic} $\chi_{\ell,K}: G_K\longrightarrow \mathbb{Z}_{\ell}^{\times}$($\ell\ne p$) defined by
    \begin{equation*}
        g(\zeta_{\ell^n})=\zeta_{\ell^n}^{\chi_{\ell,K}(g) \ \text{mod} \ \ell^n},
    \end{equation*}
    with $\zeta_{\ell^n}$ is a primitive $\ell^n$-th root of unity. The topological generator $\sigma$(Frobenius lift) of $\hat{\mathbb{Z}}$ satisfies $\chi_{\ell,K}(\sigma)=q \ \text{mod}\ \ell^{\infty}$, thus the generator is the diagonal multiplication by $q$.
\end{remark}

\subsection{Maximal tamely ramified extension}

The maximal tamely ramified extension $K^{tr}/K$ is obtained by $K(\{\pi_K^{1/n},\zeta_n:(n,p)=1\})=K^{ur}(\{\pi_K^{1/n}:(n,p)=1\})$.

The tame inertial group $I_t=\text{Gal}(K^{tr}/K^{ur})$ is isomorphic to $\hat{\mathbb{Z}}^{(p')}=\prod_{q\ne p}\mathbb{Z}_{q}$. And the full(tamely ramified) absolute Galois group \\ 
$\text{Gal}(K^{tr}/K)$ fits into the short exact sequence:

\begin{equation*}
 0\rightarrow   I_t\rightarrow\text{Gal}(K^{tr}/K)\rightarrow\text{Gal}(K^{ur}/K)\rightarrow 0,
\end{equation*}
with $I_t=\text{Gal}(K^{tr}/K^{ur})\cong\hat{\mathbb{Z}}^{(p')}$, $\text{Gal}(K^{ur}/K)\cong \hat{\mathbb{Z}}$. The sequence splits and gives the semi-direct product 
\begin{equation*}
    \text{Gal}(K^{tr}/K)\cong \hat{\mathbb{Z}}^{(p')}\rtimes \hat{\mathbb{Z}},
\end{equation*}  
with two topological generator $\sigma$(Frobenius lift) and $\tau$, subject to the relation :
\begin{equation*}
    \sigma\tau\sigma^{-1}=\tau^q.
\end{equation*}

\begin{remark}
    \begin{enumerate}
        \item The inertial group $\text{Gal}(\overline{K}/K^{ur})=P_K\rtimes I_t$, where $P_K$ is the pro-$p$ wild inertial group(corresponds to wild ramification part) and $I_t$ is the tame inertial group. $P_K$ is a pro-$p$ Demu\v{s}kin group , could be finitely or infinitely generated;
        \item The basic property about the two generators: $\tau(\zeta_n)=\zeta_n ,  \\ \tau(\pi_K^{1/n})=\zeta_n\pi_K^{1/n}$,
        $\sigma(\zeta_n)=\zeta_n^q$, for some primitive $n$-th root of unity ($(n,p)=1$). This two generator descend to algebraic generators for finite extension.
    \end{enumerate}
\end{remark}

\subsection{Local p-adic dynamics}

We investigate the Galois property associated to the local dynamics of $p$-adic attracting fixed point germ :

\begin{definition}
   A power series $f(x)=a_0+a_1x+\cdots \in \mathcal{O}_K[[x]]$ defines a bounded holomorphic function on $\mathrm{m}_{\mathbb{C}_{\mathfrak{p}}}$ , we define the \textbf{Weierstrass degree} of $f$ to be 
   \begin{equation*}
       \text{wdeg}(f):=\min_{i\in \mathbb{N}}\{i\ | \ a_i\in \mathcal{O}_{K}^{\times} \} \ \text{or} \ +\infty
   \end{equation*}
   if such integer does not exist.
\end{definition}

For a rational function with good separable reduction at some prime $\mathfrak{p}$ of the base field, we have an integral model of the system over the local ring, and the restriction of rational function to analytic germ commutes with the reduction map, thus the Weierstrass degree of any Taylor series associated to the function is finite. The degree tells us the sum of multiplicity of critical point of the map on the disk $\mathrm{m}_{\mathbb{C}_\mathfrak{p}}$ .

\begin{proposition}

    For a power series $f\in \mathcal{O}_K[[x]]$ associated to a rational function $F$ with good reduction at $\mathfrak{p}$, with $\text{wdeg}(f)=d>1$ and the characteristic of the residue field of $K$ is greater than $d$. Then any $a\in \mathrm{m}_K$ with $f(a)\ne a$, there is some integer $n_0$ s.t. for any $n>n_0$ and any roots $\alpha$ of $f^n=a$, $\tau(\alpha)\ne \alpha$ for the inertial generator $\tau \in \text{Gal}(K^{tr}/K)$. 
\end{proposition}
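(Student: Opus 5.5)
The plan is to reduce to a Newton polygon computation in the disk $\mathrm{m}_{\mathbb{C}_\mathfrak{p}}$. The idea is to show that along any backward orbit of $a$ the valuations of the iterated preimages eventually have denominators divisible by arbitrarily high powers of $d$, so that they leave the value group of $K^{ur}$.

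\textbf{Normalization.} Write $f(x)=\sum a_i x^i$ with $a_0,\dots,a_{d-1}\in\mathfrak{p}$ and $a_d\in\mathcal{O}_K^\times$. Then $|f(x)|<1$ whenever $|x|<1$, so $f$ maps $\mathrm{m}_{\mathbb{C}_\mathfrak{p}}$ into itself.
\begin{itemize}
\item The series $f(x)-x$ has Weierstrass degree $1$, because $a_1-1$ is a unit. By Weierstrass preparation it has a unique zero $\xi\in\mathrm{m}_K$, which is the attracting fixed point.
\item Conjugating by $x\mapsto x+\xi$ (a $K$-rational translation, so it does not change fields of definition), we may assume $\xi=0$.
\item The reduction is unchanged by this conjugation, so we still have $\text{wdeg}(f)=d$, and now $a_0=0$ and $v(a_1)>0$. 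Normalize $v(\pi_K)=1$.
\item The hypothesis $f(a)\neq a$ gives $a\neq 0$, so $v(a)<\infty$.
\item Every root $\alpha$ of $f^n(x)=a$ lies in the open disk, and is obtained by solving $n$ successive equations $f(y)=z$. Each has at most $d$ roots in the disk, and $d<p$, so every step is a tame extension. Hence $\alpha\in K^{tr}$.
\end{itemize}

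\textbf{Reduction to valuations.} The fixed field of $\tau$ in $K^{tr}$ is $K^{ur}$, since $\tau$ topologically generates $I_t=\text{Gal}(K^{tr}/K^{ur})$. It therefore suffices to show $\alpha\notin K^{ur}$. For this it is enough to show $v(\alpha)\notin\mathbb{Z}$, because $v(K^{ur\times})=\mathbb{Z}$.

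\textbf{Key step: a Newton polygon analysis of $f(y)=z$ for $z$ in the disk.}
\begin{itemize}
\item Let $\delta=\min_{1\le i<d} v(a_i)/(d-i)$, with $\delta=\infty$ if all these $a_i$ vanish. Note $\delta>0$.
\item If $0<v(z)<d\delta$, the Newton polygon of $f(y)-z$ on $[0,d]$ is the single segment from $(0,v(z))$ to $(d,0)$. So all $d$ roots in the disk have valuation exactly $v(z)/d$.
\item If $v(z)\ge d\delta$, the roots have valuation either $\ge\delta$ or $\le v(z)-v(a_1)$. The latter roots come from the first segment when $a_1\neq 0$, with valuation exactly $v(z)-v(a_1)$ if that segment runs from $(0,v(z))$ to $(1,v(a_1))$.
\item So along a backward orbit starting at $v(a)$, the valuation either drops by at least $v(a_1)>0$ at each step, or jumps to a value bounded by a constant depending only on the $a_i$.
\item In either case, after a number of steps $n_1$ bounded in terms of $v(a)$ and the coefficients, we reach a preimage $z_{n_1}$ with $0<v(z_{n_1})<d\delta$.
\item From then on the first bullet applies at every step, since $v(z)/d<v(z)<d\delta$. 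Hence $v(z_{n_1+k})=v(z_{n_1})/d^k$.
\end{itemize}
The quantity $v(z_{n_1})/d^k$ is positive and tends to $0$, so it is not an integer once $d^k>v(z_{n_1})$. This gives $n_0$.

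\textbf{Main obstacle: uniformity of $n_0$.} The bound $n_0$ must hold over all branches of the backward tree, not just along one of them. I would handle this by bounding, uniformly over branches, the number of steps the orbit can spend in the regime $v(z)\ge d\delta$. The valuations there lie in a finite union of explicit ranges. Each step either decreases the valuation by at least $\min(v(a_1),\delta)$, or lands in the bounded band $[\delta,C]$. From the band, one further step brings it below $d\delta$ unless it sits on the first segment, and that segment again decreases the valuation.

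Care is needed when the Newton polygon has a vertex exactly on the boundary ($v(z)=d\delta$). I would treat that case by perturbing the threshold to $d\delta-\epsilon$ with $\epsilon\in\frac{1}{d^{N}}\mathbb{Z}$. The superattracting case $a_1=0$ is simpler: there the first segment is absent, so the valuation is divided by $d$, or lands in the band, at every step.
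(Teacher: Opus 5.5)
Your strategy is the paper's. You find the attracting fixed point, show that on every backward branch the valuation of the preimages (measured from that point) drops below $v(\pi_K)=1$ after a uniformly bounded number of steps, and conclude that a root with $0<v(\alpha)<1$ is not in $K^{ur}$, the fixed field of $\tau$ in $K^{tr}$. Two of your choices are cleaner than the paper's. Weierstrass preparation applied to $f(x)-x$ replaces the appeal to Rivera-Letelier's classification. You also correctly isolate $\alpha\notin K^{ur}$ (via $v(K^{ur\times})=\mathbb{Z}$) as the reason $\tau$ moves $\alpha$, whereas tameness of $K(\alpha)/K$, which the paper cites at the end, would not suffice on its own.

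The weak point is the uniformity step. You rightly call it the crux, but you support it with Newton polygon claims that are false as stated.
\begin{itemize}
\item For $a_1=0$ the first segment is not absent. Take $f(y)=\pi y^2+y^3$, so $d=3$ and $\delta=1$, and let $v(z)=100$. The polygon of $f(y)-z$ has vertices $(0,100),(2,1),(3,0)$, and it yields two roots of valuation $99/2$. That value is neither $v(z)/d$ nor a band value.
\item Take $f(y)=\pi^{1000}y+\pi^{100}y^2+\pi^{10}y^3+y^4$, so $\delta=10$ and $d\delta=40$. Your band contains $900$, the root valuation produced by the segment from $(1,1000)$ to $(2,100)$ when $v(z)$ is large. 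From $v(z)=900$ the roots have valuations $400,400,90,10$. The root of valuation $90\ge 40$ comes from a non-first segment, so ``one further step brings it below $d\delta$'' fails.
\end{itemize}

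None of this case analysis is needed. Since $a_0=0$ and the $a_i$ are integral, any root $y$ of $f(y)=z$ with $v(y)>0$ satisfies $v(z)\ge\min\bigl(v(a_1)+v(y),\,2v(y)\bigr)$. Equivalently,
\[
v(y)\le v(z)-\min\bigl(v(a_1),\,v(z)/2\bigr),
\]
which is precisely the paper's estimate $|f(t)-x_0|\le\max(|t-x_0|^2,|b_1||t-x_0|)$. Consequences:
\begin{itemize}
\item While $v(z)\ge 1$, every backward step, on all branches at once, lowers the valuation by at least $\min(v(a_1),1/2)>0$.
\item Once the valuation is below $1$, it stays in $(0,1)$, because $0<v(y)\le v(z)$.
\end{itemize}
This gives $n_0$ explicitly from $v(a)$ and $v(a_1)$. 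You need no regime split, no perturbation of the threshold, and no exact values $v(z)/d^k$.
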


\begin{proof}

by classification of $p$-adic Fatou component of rational function \cite{RiveraLetelier04}, there must be some(unique) attracting fixed point $x_0$ , and the Taylor series expansion at $x_0$ is $x_0+\sum_{i=1}^{\infty} b_i(x-x_0)^i$, and $v_\mathfrak{p}(b_1)>0$ if $b_1\ne 0$. Thus $|f(t)-x_0|_\mathfrak{p}\le \max\{|b_i(t-x_0)^i|_\mathfrak{p} \ | \ i=1,2,\cdots \ \}\le |t-x_0|_\mathfrak{p}^2 \ $ or $|b_1(t-x_0)|_\mathfrak{p}$ (possible when $b_1 \ne 0$). 

Fix a valuation on $\mathbb{C}_\mathfrak{p}$ s.t. $v_\mathfrak{p}(\pi_K)=1$. Any positive sequence(we start back-ward iteration at positive distance from $x_0$) and bounded by $1$ and satisfies $a_{n+1}\ge (a_n)^{1/2} \ \text{or} \ a_{n}/|b_1|_\mathfrak{p}$ is monotone increasing to $1$\ ($|b_1|_\mathfrak{p}<1$ ) , thus there is some $n_0$,
s.t. any root $\alpha$ of $f^{n_0+m}(x)=a$ has $\mathfrak{p}$-adic valuation less than $1$. By analyzing the Newton polygon of the power series at each time of taking pre-image under $f$, the denominator of the negative slopes are all not divisible by the characteristic, thus $K(\alpha)/\alpha$ is tamely ramified and $\tau(\alpha)\ne \alpha$.

\end{proof}

\subsection{Local Galois property of sets of critical points }

Notation as above, assume we have $f_1,\cdots , f_n$ rational functions defined over some number field $K$(contains all the critical points of each $f_i$). Denote $c_{i,j}\ , \ i=1,\cdots ,n \ , \ j=1,\cdots , t_i $ (distinct)sets of critical points of each $f_i$, with ramification degree $d_{i,j}$ of $f_i$ at $c_{i,j}$. 

In the sequel we choose a prime $\mathfrak{p}$ of $K$ s.t. all $f_i$ has good reduction at $\mathfrak{p}$, and the characteristic of $k=\mathcal{O}_{K_\mathfrak{p}}/\mathfrak{p}$ is greater than $\prod d_{i.j}$. We may assume every preperiodic critical point is either fixed or (strictly)preperiodic after taking a common times of iteration. By '\textbf{periodic}' we mean $f^n(x)=x$ for some non-negative integer $n$; '\textbf{strictly preperiodic}' we mean it's preperiodic but not periodic.

\begin{lemma} 
For a rational function $f$ defined over some number field $K$ and the prime $\mathfrak{p}$ of $K$ chosen as above, we have 

  \begin{enumerate}
      \item 
    \begin{equation*}
        \overline{f^{-1}(x)}=\overline{f}^{-1}(\overline{x})\ , \ \overline{f_{*}(x)}=\overline{f}_*(\overline{x})  
    \end{equation*}
    as divisors for any $x\in \mathbb{P}^{1}(\overline{K})$ ;
    \item $\overline{f^{-n}(D)}=\overline{f}^{-n}(\overline{D}) \ , \ \overline{\text{{Crit}}}(f^n)=\text{Crit}(\overline{f^n})$ for any divisor $D$ and positive integer $n$. ($\text{{Crit}}(f):=$ the support of the different divisor of $f$ )
\end{enumerate}
\end{lemma}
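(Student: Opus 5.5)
The plan is to establish part (1) directly from the fact that $f$ has good reduction at $\mathfrak{p}$. Part (2) then follows by induction on $n$, together with a computation of the different (ramification) divisor. The residue characteristic exceeds every local degree $d_{i,j}$, so all ramification of $\overline{f}$ is tame. This is what allows the reduction of the ramification divisor to be controlled.

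\textbf{Part (1).} Choose a normalized integral model $f=[F_0:F_1]$ with $F_0,F_1\in\mathcal{O}_\mathfrak{p}[X,Y]$ homogeneous of degree $d$. Good reduction means $\mathrm{Res}(F_0,F_1)\in\mathcal{O}_\mathfrak{p}^\times$, so $\overline{f}=[\overline{F_0}:\overline{F_1}]$ still has degree $d$.
\begin{itemize}
\item For $x=[a:b]$ with $a,b$ integral and not both in $\mathfrak{p}$, the fibre divisor $f^{*}(x)$ is the zero divisor of the degree-$d$ form $G=bF_0-aF_1$.
\item Its reduction $\overline{G}=\overline{b}\,\overline{F_0}-\overline{a}\,\overline{F_1}$ is not identically zero, since otherwise $\overline{F_0},\overline{F_1}$ would be proportional, contradicting the unit resultant.
\item Reducing the linear factors of $G$ over $\mathcal{O}_{\mathbb{C}_\mathfrak{p}}$ therefore gives exactly the factors of $\overline{G}$, with multiplicities. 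This proves $\overline{f^{*}(x)}=\overline{f}^{*}(\overline{x})$.
\end{itemize}
The push-forward statement is immediate, because reduction commutes with evaluation: $\overline{f(y)}=\overline{f}(\overline{y})$ pointwise, again by the unit resultant.

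\textbf{Part (2), preimages.} Since $f^{n}$ has good reduction with $\overline{f^{n}}=\overline{f}^{\,n}$ (composition of models keeps a unit resultant), the identity $\overline{f^{-n}(D)}=\overline{f}^{-n}(\overline{D})$ follows. One can either apply Part (1) to $f^n$, or iterate Part (1) and extend linearly in $D$.

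\textbf{Part (2), critical points.} I would compute the ramification divisor of $f$ via the Wronskian-type form
\[
W=\frac{\partial F_0}{\partial X}\frac{\partial F_1}{\partial Y}-\frac{\partial F_0}{\partial Y}\frac{\partial F_1}{\partial X},
\]
which is homogeneous of degree $2d-2$ and whose zero divisor is $R_f$ in characteristic zero. Its reduction is the corresponding form for $\overline{f}$. Because $p>d$, the map $\overline{f}$ is separable and tamely ramified, and the Riemann–Hurwitz count $2d-2$ holds for $\overline{f}$. Hence $\overline{W}\neq 0$, so $\overline{W}$ cuts out $R_{\overline{f}}$ with the correct multiplicities (tameness gives multiplicity $e-1$). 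Reducing the factorization of $W$ then yields $\overline{R_f}=R_{\overline{f}}$, and taking supports gives $\overline{\mathrm{Crit}(f)}=\mathrm{Crit}(\overline{f})$.

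For $f^{n}$, I have two routes:
\begin{itemize}
\item apply the same argument to $f^{n}$;
\item use the chain rule $R_{f^n}=\sum_{k<n}(f^{k})^{*}R_f$ together with the preimage identity just proved.
\end{itemize}
The catch in the first route is that the local degrees of $f^n$ may exceed $p$. I would therefore use the chain rule: each step only involves $f$, whose ramification is tame.

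\textbf{Main obstacle.} The main obstacle is this last point. The hypothesis is only that $p>\prod d_{i,j}$, so one must make sure that wild ramification never enters; the chain-rule route needs only tameness of $f$ itself, which is guaranteed. A second subtlety is confirming $\overline{W}\not\equiv 0$. Here I would argue by degree: $\overline{f}$ is separable of degree $d<p$, and the Riemann–Hurwitz count $2d-2$ holds for it. So its ramification divisor is a nonzero divisor of degree $2d-2$ when $d\ge 2$, and it agrees with $\operatorname{div}\overline{W}$.
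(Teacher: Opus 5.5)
Your proposal is correct and follows essentially the paper's route: good reduction of an integral model gives (1) and the preimage identity for $f^n$, and the critical points are cut out by a polynomial (your Wronskian form) whose reduction mod $\mathfrak{p}$ is nonzero, so its roots reduce exactly to those of the reduced equation. Your worry about $f^n$ is unnecessary. Every local degree of $f^n$ is a product of integers $<p$, hence prime to $p$, and $\deg f^n=d^n$ is also prime to $p$, so the Wronskian argument applies to $f^n$ directly, as in the paper. Likewise $p>d$ follows from the hypothesis, since $p>\prod d_{i,j}\ge 1+\sum(d_{i,j}-1)=2d-1$.
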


\begin{proof}
By the choice of the prime $\mathfrak{p}$ of $K$ s.t. $f$ has good reduction and $\overline{f}$ is tamely ramified. Then $f$ extends to a finite flat morphism over the local ring $\mathcal{O}_{K_\mathfrak{p}}$, with generic fiber $f$ and special fiber $\overline{f}$. The first conclusion follows immediately from taking tensor product with residue field.

Then the third equality holds since the n-th iteration of the integral model is a model for $f^n$.

For the last equality, it's essentially reduced to a polynomial equation that defines the critical points and the corresponding non-zero congruence equation mod the prime $\mathfrak{p}$, whose solution are all liftable.

\end{proof}

 The above lemma ensures that for all but finitely many primes of $K$, the congruence equation $\overline{f^N}(x)=\overline{c}$ defines the same divisor as the reduction of the set of solution $f^N(x)=c$ , so the estimation on the upper bound is explicit.

At this end we prove proposition 1.1 which is the key technical result:
The ideal of the proof is to treat the back-ward orbit of a critical points in 2 parts : one part is the points that are strictly preperiodic mod $p$ , the back-ward orbit of such point is mutually distinct mod $p$; second part is the 'ramification part', that corresponds to periodic critical points under reduction, and we use the $p$-adic analysis and inertial group of $p$-adic Galois group to tackle this part.

\begin{proof}[Proof of proposition \ref{prop1} ]
    For each $f_i$ and critical point $c_{i,j}$ of $f_i$: if $\overline{c_{i,j}}$ is periodic under $\overline{f_i}$, denote its (minimal)period by $p_{i,j}$, write all the critical points $c_{i,j_1},\cdots,c_{i,j_{l_{i,j}}}$ of $f_i$ s.t. there is some $0\le k_v<p_{i,j} $ with $\overline{f^{k_v}_i(c_{i,j})}=\overline{c_{i,j_v}}$, for $1\le v \le l_{i,j}$ ($j_1=j$). Then for the $\mathfrak{p}$-adic unit disk $D_{c_{i,j}}$(with base field $\overline{K_\mathfrak{p}}$) corresponds to $\overline{c_{i,j}}$, $g_{i,j}:=f_{i}^{\circ p_{i,j}}|_{D_{c_{i,j}}}$ defines a holomorphic germ in $\mathcal{O}_{K_\mathfrak{p}}[[x-c_{i,j}]]$ with $\text{wdeg}(g_{i,j})=\prod_{v=1}^{l_{i,j}} d_{i,j_v}$. By lemma 2.1, for every positive integer $n$, every point in $\overline{f_{i}^{-np_{i,j}}(c_{i,j})\backslash g_{i,j}^{-n}(c_{i,j})}$ is a strictly preperiodic under $\overline{f_i}$.
    
    Denote all the non-fixed critical points $c_{i,j}$ s.t. $\overline{c_{i,j}}$ is periodic under $\overline{f_{i}}$ by $\{c_{i,j_{k}}\ | \ i=1,\cdots, n ,1\le k\le l_i \}$($l_i=-1$ if $f_i$ has no such critical point). The period $p_{i,j_k}$ and holomorphic germ $g_{i,j_k}$ with $\text{wdeg}(g_{i,j_k})=w_{i,j_k}$ as above. Then by proposition 2.2 , there are some integer $n_{i,j_k}$ s.t. for any $m> n_{i,j_k}$ and any roots $\alpha$ of $g_{i,j_k}^{\circ m}=c_{i,j_k}$, $\tau(\alpha)\ne \alpha$.
    Let $p:=\prod_{i,j_k}p_{i,j_k}$. So sufficiently large $m$, the inertial element $\tau $ satisfies $\tau(\alpha)\ne \alpha$ for every number $\alpha$ in $g^{\circ(-mp/p_{i,j_k})}_{i,j_k}(c_{i,j_k})$ . Let $f$ be a positive integer s.t. $\sigma^f(\alpha)=\alpha$ for any such root $\alpha$ and
    let $\widetilde{\gamma}=\tau \sigma^f$, then $\widetilde{\gamma}(\alpha)=\tau(\alpha)\ne \alpha$.

     Fix such $m$, for any point $x$ in 
     $f_{i}^{-mp}(c_{i,j_k})\backslash g_{i,j_k}^{-mp/p_{i,j_k}}(c_{i,j_k})$ or strictly periodic $\overline{c_{i,j}}$ is strictly preperiodic under $\overline{f_i}$, the backward orbit sequence $\{ x_t\}$ with $x_1=x$, $f_i(x_{t+1})=x_t$ , the points in $\{ \overline{x_t} \}$ are distinct. Since any finite extension of $k_\mathfrak{p}$ is a finite field, for any positive integer $h$, there is a integer $N$, s.t. for any $m'>N$, any such back-ward orbit sequence $\{ \overline{x_t} \}$ associated to some $c_{i,j}$ in the above mentioned case satisfies $[k_\mathfrak{p}(x_m')/k_\mathfrak{p}]>h$. If we take $h>\Tilde{q}^f$($\Tilde{q}=\# \mathcal{O}_\mathfrak{p}/\mathfrak{p}$ ), then any $x_m$ satisfies $\sigma_\mathfrak{p}^f(x_m)=x_m^{\Tilde{q}^f}\ne x_m$.
     
     Set 
     
      \[
     S_{i,j}=
     \begin{cases}
         f_i^{-2Nmp}(f_i^{-1}(c_{i,j}) \backslash \{ c_{i,j} \} ) & \text{if} \ f_i(c_{i,j})=c_{i,j}; \\
         f_i^{-(2Nmp+1)}(c_{i,j}) & \text{otherwise} .
     \end{cases}
     \]
     
     Firstly $S_{i,j}$ is $\text{Gal}(\overline{K}/K)$-invariant ($\text{Gal}(\overline{K}/K)(S_{i,j})=S_{i,j}$) for any $i,j$ , since $c_{i,j}\in K$. Let $\gamma=\tilde{\gamma}|_{K(\{ S_{i,j}\})}$. For any $x\in S_{i,j}$: consider the sequence $\overline{x},\overline{f_i(x)},\cdots , \overline{f_i^{2Nmp}(x)}$, if there is some $\overline{f_i^{t_1}(x)}=\overline{f_i^{t_2}(x)} \ , \ t_1<t_2$, then there is some $0\le t_0<2Nmp$, s.t. $\overline{x},\overline{f_i(x)},\cdots , \overline{f_i^{t_0-1}(x)}$ are distinct, $\overline{f_i^{t_0}(x)}=\overline{f_i^{t_x}(c_{i,j})}$ for some $0\le t_x< p_{i,j}$.
     
     If $t_0>N$, since there is a canonical projection $\mathscr{p}: \text{Gal}(\overline{K_\mathfrak{p}}/K_\mathfrak{p})\rightarrow \text{Gal}(\overline{k_\mathfrak{p}}/k_\mathfrak{p})$ which commutes with the projection $\mathbf{p}: \overline{K_\mathfrak{p}}\rightarrow \overline{k_\mathfrak{p}}$, $\mathbf{p}(g(x))=\mathscr{p}(g)(\overline{x})=\overline{x}^{q^f}\ne \overline{x}$, thus $x\ne \gamma(x)$.

     If $2Nmp-t_0>(m+1)p$, then there exists some $t_0\le v < 2Nmp$, s.t. $f_i^{v}(x)\in g_{i,j}^{\circ (-mp/p_{i,j}-1)}(g_{i,j}^{-1}(c_{i,j})\backslash \{ c_{i,j} \})$, then $f_i^v(\gamma(x))=\gamma(f_i^v(x))=\tau\sigma^f(\alpha)=\tau(\alpha)\ne \alpha =f_i^v(x)$, thus $g(x)\ne x$.
     Since $N+(m+1)p<2Nmp+1$, one of the cases must happen and we conclude that $\gamma$ acts on each $S_{i,j}$ fixed point freely. 
    
\end{proof}

\section{Behavior of forward orbit and distribution of attracting periodic points}

\begin{theorem}
    (\text{Chebotarev Density theorem}) Let $L/K$ be a Galois extension of number fields, denote $G:=\text{Gal}(L/K)$. For a conjugating invariant subset $C\subseteq G$, define
    \begin{equation*}
        \Pi_C(x,L/K):= \# \{ \mathfrak{p} \ : \ N(\mathfrak{p})\le x , \mathfrak{p} \ \text{is unramified in} \ L/K , \ \text{and}\ \sigma_\mathfrak{p}\subseteq C \}, 
    \end{equation*}
    where $N(\mathfrak{p})$ is the numerical norm of prime ideal $\mathfrak{p}$ of $K$, $\sigma_\mathfrak{p}$ is the Frobenius conjugacy class corresponding to $\mathfrak{p}$ in $\text{Gal}(L/K)$. Then the natural density 
    \begin{equation*}
        \lim_{x\rightarrow \infty} \frac{\Pi_{C}(x,L/K)}{\Pi_G(x,L/K)}=\frac{|C|}{|G|}.
    \end{equation*}

\end{theorem}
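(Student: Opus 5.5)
The statement is the classical Chebotarev density theorem, so the plan is to follow the standard route: reduce to the cyclic case, then handle that case with Hecke $L$-functions and class field theory. Since both sides are additive in $C$, it suffices to treat $C$ a single conjugacy class, say that of $\sigma\in G$, of order $m$. Put $H=\langle\sigma\rangle$ and $E=L^H$, so that $L/E$ is cyclic.

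\textbf{Reduction to the cyclic case.} Compare primes $\mathfrak{p}$ of $K$ whose Frobenius class is $C$ with primes $\mathfrak{P}$ of $E$ of residue degree one over $K$ whose Frobenius in $\text{Gal}(L/E)$ is exactly $\sigma$. Primes of $E$ of degree at least two over $K$ have norm $N\mathfrak{p}^{f}$ with $f\ge 2$, so they contribute $O(\sqrt{x}\,[E:K])$ and can be discarded. A double coset count in $G$ (counting $g\in G$ with $g^{-1}\sigma g\in H$ equal to $\sigma$, modulo $H$) shows that each $\mathfrak{p}$ with Frobenius class $C$ lies under exactly $|C_G(\sigma)|/|H|=|G|/(|C|\,m)$ such degree-one primes $\mathfrak{P}$. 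Therefore, up to $O(\sqrt{x})$,
\begin{equation*}
\Pi_C(x,L/K)\approx \frac{|C|\,m}{|G|}\,\#\{\mathfrak{P}\subset\mathcal{O}_E:\ N\mathfrak{P}\le x,\ \mathrm{Frob}_{\mathfrak{P}}=\sigma\}.
\end{equation*}
Once the cyclic case gives density $1/m$ for the right-hand count relative to all primes of $E$, and we use that the number of primes of $E$ with norm at most $x$ is $\sim x/\log x$, the claim $|C|/|G|$ follows.

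\textbf{The cyclic (abelian) case.} By Artin reciprocity, $\mathrm{Frob}_{\mathfrak{P}}$ factors through a ray class group of $E$ modulo the conductor of $L/E$. Orthogonality of the characters $\chi$ of $\text{Gal}(L/E)$ writes the indicator of $\{\mathrm{Frob}=\sigma\}$ as $\frac{1}{m}\sum_\chi\overline{\chi(\sigma)}\chi(\mathrm{Frob}_{\mathfrak{P}})$. So it suffices to prove the following: for nontrivial $\chi$, the sum $\sum_{N\mathfrak{P}\le x}\chi(\mathfrak{P})=o(x/\log x)$. For the trivial character this sum is the prime ideal theorem, which gives the main term.

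\textbf{Main obstacle.} The main analytic input is that the Hecke $L$-functions $L(s,\chi)$ continue meromorphically, are analytic at $s=1$ when $\chi\ne 1$, and are non-vanishing on $\mathrm{Re}(s)=1$. For non-vanishing at $s=1$ I would use the factorization $\zeta_L(s)=\prod_\chi L(s,\chi)$: the simple pole of $\zeta_L$ at $s=1$ forces each $L(1,\chi)\neq0$. Non-vanishing on the rest of the line comes from the usual $3+4\cos\theta+\cos 2\theta\ge0$ argument. A Wiener–Ikehara Tauberian theorem applied to $-L'/L$ then yields the required $o(x/\log x)$ bounds.

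Ramified primes are finite in number and do not affect densities, and dividing by $\Pi_G(x,L/K)\sim x/\log x$ gives the stated limit. Since this is a standard theorem, in the paper I would simply cite Neukirch or Serre for the analytic steps and present only the reduction.
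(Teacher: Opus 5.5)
The paper gives no proof of this statement. It quotes the classical Chebotarev density theorem as a black box and uses it only to produce infinitely many, or a positive proportion of, primes with a prescribed Frobenius class (Lemma 3.1 and the forward-orbit Proposition 3.1). The effective Section 4 switches to the Lagarias--Odlyzko bound instead.

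Your sketch is the standard Deuring-style proof: reduce to the cyclic extension $L/E$ with $E=L^{\langle\sigma\rangle}$, then use Hecke $L$-functions and a Tauberian theorem. It is correct in outline, and your count of degree-one primes is right. If $\mathfrak{Q}_0$ has Frobenius $\sigma$, then $g\mathfrak{Q}_0\cap E$ has degree one over $K$ exactly when $g$ normalizes $H$. Its Frobenius over $E$ is then $g\sigma g^{-1}$, which gives $|C_G(\sigma)|/m=|G|/(|C|m)$ such primes.

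One step needs a small repair. Wiener--Ikehara requires nonnegative coefficients, so it cannot be applied to $-L'/L(s,\chi)$ for a single nontrivial $\chi$. Apply it instead to $\frac{1}{m}\sum_\chi\overline{\chi(\sigma)}\,(-L'/L)(s,\chi)$. Its coefficients are $\log N\mathfrak{P}$ on prime powers $\mathfrak{P}^k$ with $\mathrm{Frob}_{\mathfrak{P}}^k=\sigma$ and zero otherwise. By your non-vanishing results (including the prime ideal theorem for $E$), it equals $\frac{1/m}{s-1}$ plus a function holomorphic on a neighbourhood of $\mathrm{Re}(s)\ge 1$. Then discard the $k\ge 2$ terms, which contribute $O(\sqrt{x}\log x)$, and pass from $\psi$ to $\pi$ by partial summation.

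As for what each route buys, citing is all the paper needs. Its qualitative applications would already follow from the Dirichlet-density version, which only needs $L(s,\chi)$ holomorphic and nonzero at $s=1$. Your heavier input, non-vanishing on the whole line $\mathrm{Re}(s)=1$ plus the Tauberian step, is exactly what the natural-density formulation in the statement requires.
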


\begin{lemma}
    For a Galois-invariant set of algebraic numbers $S=\{ x_i\} \subset \overline{K}$, if there is some 
 $\sigma\in \text{Gal}(K(S)/K)$ acts fixed point freely on $S$. Then there is some prime $\mathfrak{p}$(actually infinitely many) of $K$ s.t. the reduction $\overline{S}=\{ \overline{x_1},\cdots , \overline{x_n}\}$ satisfies $\overline{S}\ \cap \ k_\mathfrak{p}=\emptyset$, where $k_\mathfrak{p}=\mathcal{O}_K/\mathfrak{p}$.
\end{lemma}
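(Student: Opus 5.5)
Apply the Chebotarev density theorem (just stated) to the finite Galois extension $L=K(S)/K$, with $C$ the conjugacy class of the given fixed-point-free element $\sigma\in G=\text{Gal}(L/K)$. First note that $S$ must be finite, otherwise $K(S)/K$ is not a finite extension; this is the situation in which the lemma will be applied, e.g. $S=S_{i,j}$ from Proposition \ref{prop1}. Next, conjugates of $\sigma$ are still fixed-point free on $S$: if $\rho\sigma\rho^{-1}(x)=x$, then $\sigma(\rho^{-1}x)=\rho^{-1}x$, and $\rho^{-1}x\in S$ because $S$ is Galois invariant. Chebotarev then gives a set of primes $\mathfrak{p}$ of $K$ of natural density $|C|/|G|>0$, in particular infinitely many, that are unramified in $L$ and have Frobenius class equal to $C$.

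Discard the finitely many of these primes that divide the denominator of some $x\in S$, or the discriminant of $\prod_{x\in S}(T-x)$, or any difference $x-y$ with $x\ne y$ in $S$. Fix one remaining $\mathfrak{p}$, a prime $\mathfrak{P}\mid\mathfrak{p}$ of $L$, and its Frobenius $\text{Frob}_{\mathfrak{P}}\in C$. This element satisfies $\text{Frob}_{\mathfrak{P}}(y)\equiv y^{q} \pmod{\mathfrak{P}}$ for all $y\in\mathcal{O}_{L,\mathfrak{P}}$, where $q=\#k_\mathfrak{p}$.

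Now suppose $\overline{x}\in k_\mathfrak{p}$ for some $x\in S$. Then $\overline{x}^{q}=\overline{x}$, so $\overline{\text{Frob}_{\mathfrak{P}}(x)}=\overline{x}^q=\overline{x}$. But $\text{Frob}_{\mathfrak{P}}(x)\in S$ and it differs from $x$, since $\text{Frob}_{\mathfrak{P}}$ is fixed-point free on $S$. So two distinct elements of $S$ become congruent mod $\mathfrak{P}$, which contradicts the choice of $\mathfrak{p}$. Hence $\overline{S}\cap k_\mathfrak{p}=\emptyset$, and this holds for a positive-density set of primes.

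The main point requiring care is the reduction step: the map $S\to\overline{S}$ must be well defined, which needs integrality, and injective, which needs the exclusion of primes dividing the differences. In $\mathbb{P}^1$ one can handle the point $\infty$ and non-integral $x$ by working with homogeneous coordinates. The same exclusion also makes it legitimate to compare Frobenius on $L$ with the $q$-power map on the residue field.
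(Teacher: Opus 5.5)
Your proposal is correct and follows essentially the same route as the paper: Chebotarev applied to the conjugacy class of $\sigma$ in $\text{Gal}(K(S)/K)$, exclusion of the primes dividing the differences $x-y$ for distinct $x,y\in S$, and the Frobenius congruence $\text{Frob}_{\mathfrak{P}}(x)\equiv x^{q} \pmod{\mathfrak{P}}$, which forces two distinct elements of $S$ to be congruent mod $\mathfrak{P}$. You also make explicit two points the paper leaves implicit: conjugates of $\sigma$ remain fixed-point free, so any $\mathfrak{P}\mid\mathfrak{p}$ works, and primes where some element of $S$ is non-integral must be excluded as well.
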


\begin{proof}
 Denote the conjugacy class of $\sigma$ in $\text{Gal}(K(S)/K)$ by $\text{Cl}(\sigma)$ , $\prod_{i\ne j}(x_i-x_j)=D$. Denote the set of prime ideals of $K$ that is below at least one prime factor $D$ in $K(x_1,\cdots, x_n)$ by $P$.
By Chebotarev density theorem , there is some $x>0$ s.t. there is some prime $\mathfrak{p}\in [\Pi_{\text{Cl}(\sigma)}(K(S)/K)\backslash P]$. So there is some prime $\mathfrak{P}$ above $\mathfrak{p}$, with corresponding Frobenius elements  $\sigma$ s.t. $\overline{\sigma(x_i)}=(\overline{x_i})^{q}$, where $q=\# \mathcal{O}_\mathfrak{p}/\mathfrak{p}$. 

If $\overline{x_{i_0}}\in k_\mathfrak{p}$ for some $x_{i_0}$, then $\overline{x_{i_0}}=(\overline{x_{i_0}})^q=\overline{\sigma(x_{i_0})}$, which implies that $\mathfrak{P} \ | \ \sigma(x_{i_0})-x_{i_0}$, this contradicts to $\mathfrak{P}\not\in P$. Thus $\overline{x_i}\not\in k_\mathfrak{p}$ for any $x_i \in S$. 

\end{proof}

As a consequence of the theorem, given any endomorphism \\ $(f_i)_{i=1,\cdots,n}$ of $(\mathbb{P}^1)^n$ defined over some number field, any closed point will eventually goes into a quasi-periodic domain of the endomorphism under iteration, for a positive proportion of primes(after some finite field extension):

\begin{proposition}
    For any finite set of rational functions $f_1,\cdots,f_n$ of \mbox{} $\mathbb{P}^1$ defined over some number field $K$, any point $x=(x_1,\cdots, x_n)\in (\mathbb{P}^1)^n(\overline{K})$, there is some finite extension $L/K$, a set of primes $\Pi$ of positive density of $\mathcal{O}_L$ and some $m_0$ s.t. for any $\mathfrak{p}\in \Pi$, $F=f_1\times\cdots\times f_n$ has good reduction \text{mod} $\mathfrak{p}$ and the forward orbit  $\{ F_\mathfrak{p}^{\circ (m+m_0)}(\overline{x_1},\cdots, \overline{x_n})\ | \ m=0,1,\cdots , \}$ does not contains any critical points of $F_\mathfrak{p}$ , unless some $x_i$ lies in the pre-image of some fixed critical point of $f_i$.
\end{proposition}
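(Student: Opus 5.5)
We would deduce the statement by combining Proposition \ref{prop1} with the preceding Chebotarev lemma (Lemma 3.1). First we enlarge $K$ to a finite extension $L$ containing all critical points $c_{i,j}$ of every $f_i$ and all coordinates $x_i$; over $L$ we also replace each $f_i$ by a common iterate $f_i^{\circ r}$ so that every periodic critical point is fixed, as Proposition \ref{prop1} assumes. This is harmless: if the forward orbits of $F^{\circ r}$ applied to each of the finitely many points $F^{\circ s}(x)$, $0\le s<r$, avoid the critical locus from some step on, so does the forward orbit of $x$ under $F$. The reason is that $\mathrm{Crit}(F_\mathfrak{p})\subseteq \mathrm{Crit}(F_\mathfrak{p}^{\circ r})$, and Lemma 2.1 identifies the critical locus of the reduction with the reduction of the critical locus. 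Proposition \ref{prop1} then provides the sets $S_{i,j}$, an integer $\mathcal{N}$, and an element $\gamma\in\mathrm{Gal}(\overline{L}/L)$ acting fixed-point freely on every $S_{i,j}$ at once. We apply Lemma 3.1 to the Galois-invariant finite set $S=\bigcup_{i,j}S_{i,j}$ and $\gamma$. Chebotarev then gives a set $\Pi$ of primes of positive density $|\mathrm{Cl}(\gamma)|/|\mathrm{Gal}(L(S)/L)|$. After discarding finitely many primes (bad reduction of some $f_i$, small residue characteristic, primes dividing discriminants), every $\mathfrak{p}\in\Pi$ satisfies $\overline{S}\cap k_\mathfrak{p}=\emptyset$ and the conclusions of Lemma 2.1.

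Next we choose $m_0$ and check the orbit condition coordinatewise. A critical point of $F_\mathfrak{p}$ is a point having some coordinate $i$ equal to a critical point $\overline{c_{i,j}}$ of $\overline{f_i}$ (using Lemma 2.1(2)). Fix $i$ and suppose $\overline{f_i}^{\,M}(\overline{x_i})=\overline{c_{i,j}}$ for some $M\ge \mathcal{N}+1$. Then $\overline{y}:=\overline{f_i}^{\,M-\mathcal{N}}(\overline{x_i})$ lies in $\overline{f_i}^{-\mathcal{N}}(\overline{c_{i,j}})=\overline{f_i^{-\mathcal{N}}(c_{i,j})}$ by Lemma 2.1. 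Since $x_i\in L$, this point lies in $k_\mathfrak{p}$, the residue field. If $c_{i,j}$ is not fixed, $\overline{y}\in\overline{S_{i,j}}\cap k_\mathfrak{p}=\emptyset$, which is a contradiction. Hence for such $j$, the orbit of $\overline{x_i}$ can hit $\overline{c_{i,j}}$ only at times $M\le\mathcal{N}$. So $m_0=\mathcal{N}+1$ suffices for the non-fixed critical points, uniformly in $\mathfrak{p}$.

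The main obstacle is the fixed critical points $c=c_{i,j}$, where $S_{i,j}=f_i^{-(\mathcal{N}-1)}(f_i^{-1}(c)\setminus\{c\})$. Once the reduced orbit hits $\overline{c}$, it stays there forever. Suppose it first enters $\overline{c}$ at time $M>\mathcal{N}$. Then the point reached at time $M-\mathcal{N}$ is a $k_\mathfrak{p}$-rational element of $\overline{f_i}^{-\mathcal{N}}(\overline{c})$. Its $(\mathcal{N}-1)$-st image is a preimage of $\overline{c}$, and we must show this preimage differs from $\overline{c}$, so that the point lies in $\overline{S_{i,j}}$. This is exactly where $M$ being the first entrance time is used. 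The delicate point is that $\overline{f_i}^{-1}(\overline{c})$ might contain $\overline{c}$ with higher multiplicity than $f_i^{-1}(c)$ does, i.e.\ another preimage of $c$ could reduce to $\overline{c}$. We would exclude this by also discarding the finitely many primes dividing differences of distinct points of $f_i^{-1}(c)$, which is included in the discriminant condition. Then $\overline{f_i^{-1}(c)\setminus\{c\}}=\overline{f_i}^{-1}(\overline{c})\setminus\{\overline{c}\}$ as divisors, and we get the same contradiction.

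So the reduced orbit either never reaches $\overline{c}$ after time $\mathcal{N}$, or it reaches it by time $\mathcal{N}$. In the latter case it stays at $\overline{c}$ forever, and the conclusion genuinely fails. We expect to handle this as the stated exception. To do so we must show that, for $\mathfrak{p}$ outside a finite set, $\overline{f_i}^{\,M}(\overline{x_i})=\overline{c}$ with $M\le\mathcal{N}$ forces $f_i^{M}(x_i)=c$. This holds because there are only finitely many pairs $(x_i,M)$ to check, and for each one with $f_i^M(x_i)\neq c$ only finitely many primes divide $f_i^M(x_i)-c$. Removing these primes does not affect the positive density. Taking $m_0=\mathcal{N}+1$ (times $r$) then completes the argument.
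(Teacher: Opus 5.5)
Your proposal is correct and follows essentially the same route as the paper: Proposition \ref{prop1} plus the Chebotarev lemma give a positive-density set of primes with $\overline{S}\cap k_\mathfrak{p}=\emptyset$. Lemma 2.1 turns a late hit of $\overline{c_{i,j}}$ into a $k_\mathfrak{p}$-rational point of $\overline{S_{i,j}}$, the first-entrance argument handles fixed critical points, and early hits are controlled by discarding finitely many primes (the paper first replaces $x$ by $F^{N}(x)$ and discards primes for all early steps, whereas you absorb early hits at non-fixed critical points into $m_0$). Your extra discriminant exclusion in the fixed case is harmless but unnecessary, since only the automatic inclusion $\overline{f_i}^{-1}(\overline{c})\setminus\{\overline{c}\}\subseteq\overline{f_i^{-1}(c)\setminus\{c\}}$ is used.
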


\begin{proof}

We extend $K$ by $K(x_1,\cdots,x_n)$. By proposition \ref{prop1}, there exist a positive number $N$ and the corresponding $\text{Gal}(\overline{L}/L)$-invariant sets $S_{i,j}$ and a Galois element $g\in \text{Gal}(\overline{K}/K)$ acts fixed point freely on each $S_{i,j} $.
Thus $g$ also acts fixed point freely on $S=\cup S_{i,j}$. 
By lemma 3.1 there exists a set of prime $P$ of positive density for some finite field extension $L/K$, s.t. for any $\mathfrak{p}\in P$ , $f_{i,j}$ have good reduction and $\overline{S}\ \cap \ l_\mathfrak{p}=\emptyset$, where $l_\mathfrak{p}=\mathcal{O}_L/\mathfrak{p}$. Then $(t_1,\cdots,t_n)$ is a critical point of $F_\mathfrak{p}$ iff $t_i=\overline{c_{i,j}}$ for some $i,j$.  

We assume that there is no $x_i$ lies in the back-ward orbit of any fixed critical point of $f_i$, then there is some number $N$, s.t. $F^{N+m}(x)\not \in \text{Crit}(F)$ for any $m>0$. Replace $x$ by $F^N(x)$. 

Exclude finitely many primes in $P$ s.t. $F^v_\mathfrak{p}(\overline{x})\in \text{Crit}(F_\mathfrak{p})$ for some $0\le v\le 2Nmp+1$, and obtain $P'$, with the term $m,p,N$ the same as in theorem 2.2. If there is some $v$ and s.t. $F_\mathfrak{p}^v(\overline{x})\in \text{Crit}(F_\mathfrak{p})$, then $v>2Nmp+1$ and there is some $i,j$ with 
\newline
$\overline{f_i^v(x_i)}=\overline{f_i}^{\circ (2Nmp+1)}(\overline{f_i^{v-2Nmp-1}(x_i)})=\overline{c_{i,j}}$. 

For all but finitely many primes in $P'$ we have $\overline{f_i^{v-2Nmp-1}(x_i)}\in \overline{f_i}^{\ \circ -(2Nmp+1)}(\overline{c_{i,j}})=\overline{f_i^{\ \circ -(2Nmp+1)}(c_{i,j})}$.By lemma 2.1, if $f_i(c_{i,j})\ne c_{i,j}$ : then
\newline
$\overline{f_i^{\ \circ -(2Nmp+1)}(c_{i,j})}=\overline{S_{i,j}}$ and $\overline{f_i^{v-2Nmp-1}(x_i)}\in \overline{S_{i,j}}\ \cap \ l_\mathfrak{p} $, a contradiction; if $c_{i,j}$ is a fixed point : we can find some $w>0$ , s.t. $\overline{f_i^{2Nmp+w}(x_i)}\ne \overline{c_{i,j}}$ and $\overline{f_i^{2Nmp+1+w}(x_i)}=\overline{c_{i,j}}$,
we can deduce that $\overline{f_i^{2Nmp+w}(x_i)}\in \overline{f^{-1}_{i,j}(c_{i,j})\backslash \{ c_{i,j}\}}$, similarly we can draw a contradiction. The proposition is proved.

\end{proof}

Together we can prove the corollary \ref{coro1} which gives information on the  attracting property of periodic points of a rational function over number field:

\begin{proof}[Proof of corollary \ref{coro1}]
    After replacing $f$ by some iteration $f^n$, we assume that all the periodic critical point is fixed. Let $f_i=f \ , \ i=1,\cdots , E \ , \ F=f^{\times E}$, $x=(c_1,c_2,\cdots , c_{E})$, where $\text{Crit}(f)=\{ c_1,\cdots , c_{E} \}$ (distinct critical points) and $c_1,\cdots, c_E$ are all the non-preperiodic critical points of $f$. By proposition 3.1, there is a set of primes of $L$ of positive density, and a positive integer $n$, s.t. for any $\mathfrak{p}\in P$ and any $m>n$, $\overline{F}^m(\overline{x})\not \in \text{Crit}(\overline{F})=\overline{\text{Crit}(F)}$, this implies that $f^{m}(c_i)\not \equiv c_i \ (\text{mod} \ \mathfrak{p})$ for all $m>n$ if $c_i$ is not fixed, which means there is no $\mathfrak{p}$-adic attracting periodic point: for a periodic point $w$ with $f^m(w)-w=0 , \ f^m(w)'\not =0$, if $\overline{(f^m(w))'}= 0$, we have $\overline{w}\in \text{Crit}(\overline{f^m})=\overline{\text{Crit}(f^m)}$, so there must be some critical point of $f^m$ in the $\mathfrak{p}$-cycles of unit disks, but all critical point of $f^m$ mapped to some critical point of $f$ under iteration, thus the critical point is periodic under reduction, so it must be periodic under $f$, contradicts to the attracting property.
\end{proof}

The following result is due to \cite{Poo14}, which generalized theorem 3.3 in \cite{BGT10} :

\begin{proposition}
    Let $K$ be a field complete w.r.t an absolute value $| \cdot |$ with residue field of characteristic $p$ and $|p|=1/p$. Denote $R$ the valuation ring of $K$ and $R \langle\mathbf{x}\rangle$ the \textbf{Tate algebra} of $R$ in $n$-variables, with $\mathbf{x}=(x_1,\cdots,x_n)$, which consists of power series $f=\sum_{\mathbf{i}\in \mathbb{N}^n}f_\mathbf{i}\mathbf{x}^\mathbf{i}$ converging on the closed unit polydisk, i.e. $|f_\mathbf{i}|\rightarrow 0$. 
    
    Then if $f\in R\langle\mathbf{x}\rangle^n$ satisfies $f(\mathbf{x})\equiv \mathbf{x} \ (\text{mod} \ p^c)$ for some $c> 1/(p-1)$, then there exists $g\in R\langle\mathbf{x},t\rangle^n$ satisfies $g(\mathbf{x},t)=f^t(\mathbf{x})\in R\langle\mathbf{x}\rangle^n$ for any $t\in \mathbb{N}$.
    
\end{proposition}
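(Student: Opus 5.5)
The plan is to linearize the iteration through the composition operator on the Tate algebra and expand $f^t$ binomially in $t$. Let $u\colon R\langle\mathbf{x}\rangle\to R\langle\mathbf{x}\rangle$ be $u(\phi)=\phi\circ f$. This is well defined because $f$ maps the closed unit polydisk to itself, since $f\in R\langle\mathbf{x}\rangle^n$. Put $\Delta:=u-\mathrm{id}$. For every $t\in\mathbb{N}$ the coordinates of $f^t(\mathbf{x})$ are $u^t(x_i)$. Since $u=\mathrm{id}+\Delta$ and these two operators commute, we have the exact finite identity
\begin{equation*}
 f^t(\mathbf{x})=\sum_{k=0}^{t}\binom{t}{k}\Delta^k(\mathbf{x})=\sum_{k\ge 0}\binom{t}{k}\Delta^k(\mathbf{x}),
\end{equation*}
where the second sum is a formal extension, valid because $\binom{t}{k}=0$ for $k>t$. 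The candidate interpolating series is therefore
\begin{equation*}
 g(\mathbf{x},t):=\sum_{k\ge 0}\binom{t}{k}\Delta^k(\mathbf{x}),
\end{equation*}
with $\binom{t}{k}=t(t-1)\cdots(t-k+1)/k!$ read as a polynomial in $t$. It remains to show that this series converges in $R\langle\mathbf{x},t\rangle^n$.

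The first key step is the estimate $\|\Delta\phi\|\le |p|^{c}\|\phi\|$ for all $\phi\in R\langle\mathbf{x}\rangle$, in the Gauss norm. Write $f(\mathbf{x})=\mathbf{x}+p^c\mathbf{h}(\mathbf{x})$ with $\mathbf{h}\in R\langle\mathbf{x}\rangle^n$. We have the Taylor-type identity
\begin{equation*}
 \phi(\mathbf{x}+\mathbf{y})-\phi(\mathbf{x})=\sum_i y_i\,\Phi_i(\mathbf{x},\mathbf{y}),\qquad \Phi_i\in R\langle\mathbf{x},\mathbf{y}\rangle,\ \|\Phi_i\|\le\|\phi\|.
 \end{equation*}
This holds because each monomial $(\mathbf{x}+\mathbf{y})^{\mathbf{j}}-\mathbf{x}^{\mathbf{j}}$ has integer coefficients and is divisible by the $y_i$, and the resulting series still converge since the coefficients of $\phi$ tend to $0$. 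Substituting $\mathbf{y}=p^c\mathbf{h}(\mathbf{x})$, which stays in the unit polydisk, gives $\Delta\phi\in p^c\|\phi\|\,R\langle\mathbf{x}\rangle$. By induction, and since $\Delta$ is $R$-linear, $\|\Delta^k(x_i)\|\le |p|^{ck}$.

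The second step bounds the binomial polynomials. The polynomial $k!\binom{t}{k}\in\mathbb{Z}[t]$ has integer coefficients (they are Stirling numbers). Hence the Gauss norm of $\binom{t}{k}$ in $K[t]$ is at most $|1/k!|=p^{v_p(k!)}\le p^{(k-1)/(p-1)}$ for $k\ge1$, using Legendre's formula $v_p(k!)=(k-s_p(k))/(p-1)$. Combining with the first step, the $k$-th term of $g$ has Gauss norm in $\mathbf{x},t$ at most
\begin{equation*}
 p^{(k-1)/(p-1)}\,p^{-ck}=p^{-1/(p-1)}\,p^{-k(c-1/(p-1))}.
\end{equation*}
This tends to $0$ because $c>1/(p-1)$, so the series converges in the Banach algebra $K\langle\mathbf{x},t\rangle^n$. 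Moreover every term has norm $\le p^{-1/(p-1)-k(c-1/(p-1))}<1$ for $k\ge1$. The $k=0$ term is $\mathbf{x}$, so $g\in R\langle\mathbf{x},t\rangle^n$. Finally, specializing $t\in\mathbb{N}$ is a continuous homomorphism, so $g(\mathbf{x},t)$ equals the finite sum above, which is $f^t(\mathbf{x})$.

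The main obstacle is the first step. The other steps are routine bookkeeping, but this is where $f\equiv\mathbf{x}\pmod{p^c}$ has to be turned into the operator bound $\|\Delta\|\le|p|^c$. In particular one must check that composition with $f$ stays inside $R\langle\mathbf{x}\rangle$ and that the divided-difference series $\Phi_i$ genuinely lie in the Tate algebra $R\langle\mathbf{x},\mathbf{y}\rangle$ with norm no larger than $\|\phi\|$. I would verify this monomial by monomial and then pass to limits using completeness. The threshold $c>1/(p-1)$ enters only through the competition between $|p|^{ck}$ and $|1/k!|$.
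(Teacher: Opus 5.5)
Your argument is correct and is essentially the proof in Poonen's note \cite{Poo14}, which the paper quotes without proof. As there, you show that the composition operator $\Delta=u-\mathrm{id}$ satisfies $\|\Delta\|\le|p|^c$, expand $(1+\Delta)^t$ binomially, and use $|1/k!|\le p^{(k-1)/(p-1)}$ to get convergence exactly when $c>1/(p-1)$. (The only cosmetic point is that $p^c$ need not lie in $K$, so $f=\mathbf{x}+p^c\mathbf{h}$ should be read as $\|f-\mathbf{x}\|\le|p|^c$; your Taylor-identity estimate handles this verbatim.)
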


The proposition tells us that we can have an interpolation along the orbit of any point where the reduced morphism is \'etale and the reduced point is fixed. Thus there is some ananlytic curves that passes through all the points in the orbit. As a simple corollary, we have the following:

For a scheme $S$, a subscheme $Z\subseteq S$, we call a infinite sequence of points $\{ x_i \}\subseteq S$ \textbf{generic} in $Z$, if any infinite subsequence of $\{ x_i \}\subseteq S$ is Zariski dense in $Z$. By the definition , $x_i\in Z$ for all but finitely many $i$.

\begin{proposition}
    For a finite extension $K/\mathbb{Q}_p$ , $\pi : X\rightarrow \text{Spec}(\mathcal{O}_K)$ a smooth scheme of finite type over ring of integer $\mathcal{O}_K$ of $K$ and $f: X\rightarrow X$ a morphism over $\text{Spec}(\mathcal{O}_K)$. Suppose $x\in X(R)$ and $f$ is \'{e}tale along the orbit of $x$, i.e. $\{ i \ | \ i\in \mathbb{N} \ , \ \overline{f}^i(\overline{x}) \in \text{Ram}(\overline{f}) \}$ is a finite set , $\overline{x} \ , \ \overline{f}$ is the specialization of $x$ and $f$ at the unique maximal ideal $\mathrm{m}_\mathfrak{p}$ of $\mathcal{O}_K$. Then there exists some subvarieties $Z_1 ,\cdots , Z_m$ with the same dimension s.t. $f^i(Z_k)\subseteq Z_{k+i}$ (the index is taken \text{mod} $m$ ) , and $O_{f^m}(f^i(x))$ is generic in $Z_i$ for $i=1,2,\cdots , m$ . 
\end{proposition}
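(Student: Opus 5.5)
We plan to reduce to the situation of the Poonen interpolation result (the Tate-algebra proposition just stated) by passing to a suitable iterate and a suitable residue disk, and then extract the subvarieties $Z_k$ as Zariski closures of tails of the orbit.

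\textbf{Step 1: reduction to a fixed residue disk.} Replacing $x$ by $f^{i_0}(x)$ for $i_0$ larger than every element of the finite set $\{ i \mid \overline{f}^i(\overline{x})\in \text{Ram}(\overline{f})\}$, we may assume $\overline{f}$ is \'etale at every point of the reduced orbit. The special fibre $X_{k}$ has only finitely many $k$-points, where $k$ is the residue field. Hence the reduced orbit $\overline{f}^i(\overline{x})$ is eventually periodic. After a further shift we may assume $\overline{x}$ is periodic, say of period $r$. Then $\overline{f}^{r}$ fixes $\overline{x}$ and is \'etale there.

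\textbf{Step 2: interpolation.} Since $X$ is smooth over $\mathcal{O}_K$, the residue polydisk $U$ of $\overline{x}$ is analytically isomorphic to the open unit polydisk of dimension $d$, and $f^r$ maps $U$ into itself by a power series with coefficients in $\mathcal{O}_K$. Its linear part reduces to an invertible matrix over $k$, because $\overline{f}^r$ is \'etale at $\overline{x}$. Raising to a further power $s$, which kills the finite group $GL_d(k)$ and then applies the standard $p$-adic congruence argument, we get $f^{rs}\equiv \mathrm{id}$ modulo $p^c$ with $c>1/(p-1)$. This is done after rescaling coordinates $\mathbf{x}\mapsto \pi\mathbf{x}$ if needed, possibly passing to a finite extension and to smaller disks. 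Applying the interpolation proposition on each of the finitely many smaller disks visited, we obtain, for each residue $0\le i<M$ (with $M$ a multiple of $rs$), a $p$-adic analytic map $g_i:\mathbb{Z}_p\to X(\mathcal{O}_K)$ with $g_i(t)=f^{Mt+i}(x)$.

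\textbf{Step 3: Zariski closures.} For each $i$ let $W_i$ be the Zariski closure of $\{f^{Mt+i}(x)\mid t\ge T\}$ for large $T$. These closures stabilize by Noetherianity. We claim every infinite subset of $\{f^{Mt+i}(x)\}$ is dense in $W_i$. Take a polynomial $h$ vanishing on an infinite subset. Then $h\circ g_i$ is an analytic function on $\mathbb{Z}_p$ with infinitely many zeros, so by Strassmann's theorem it vanishes identically, and $h$ vanishes on all of $W_i$. This argument also shows $W_i$ is irreducible, since $h_1h_2$ vanishing forces one factor to vanish on infinitely many terms. Moreover $f(W_i)\subseteq W_{i+1}$, since $f$ maps a dense set into the orbit. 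Since $f^M$ is finite on the relevant loci, the inclusion $f^{M}(W_i)\subseteq W_i$ is an equality. Therefore all the $W_i$ have the same dimension: the chain $W_i\to W_{i+1}\to\cdots\to W_{i+M}=W_i$ consists of dominant maps between irreducible varieties, so dimension is non-increasing and returns to itself. Finally set $m:=M$ and $Z_i:=W_i$, undoing the initial shift by reindexing.

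\textbf{Main obstacle.} The delicate step is Step 2. One must produce the congruence $f^{rs}\equiv\mathrm{id}\pmod{p^c}$ with $c>1/(p-1)$ uniformly on the disk, especially when $p$ is small relative to the ramification of $K$. First I would try shrinking to polydisks of radius $|p|^{e}$ and conjugating by scaling, as in \cite{BGT10}. If that fails, I would restrict to primes $p$ with $e(K/\mathbb{Q}_p)<p-1$, which is harmless for the intended application.
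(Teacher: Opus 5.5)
Your proposal is correct and matches what the paper intends. The paper gives no separate proof: it presents the statement as a direct corollary of Poonen's interpolation, via an analytic curve through each residue-class subsequence of the orbit. Your reduction to an étale periodic residue disk, followed by interpolation and the Strassmann/Zariski-closure argument in the style of \cite{BGT10}, is exactly that corollary written out.

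Two minor remarks. First, the Step 2 ``obstacle'' is not one. For the rescaled map $G$, if $G\equiv\mathrm{id}\pmod{\pi^a}$ with $a\ge 1$, then $G^p\equiv\mathrm{id}\pmod{\pi^{\min(2a,\,a+e)}}$, so $p$-power iterates reach any $c>1/(p-1)$ with no restriction on $e$. Also, the reduction of $G$ lies in the finite affine group over $k$, not just $GL_d(k)$, but that group is still finite so your argument goes through. Second, Step 3 needs no finiteness of $f$: dominance of $f\colon W_i\to W_{i+1}$ already follows from your genericity claim.
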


As a corollary of proposition 3.1 and 3.2, we can prove the theorem \ref{mainthmone} :

\begin{proof}[Proof of theorem \ref{mainthmone}:]
    If $x$ is a periodic point of $f$ then the result is clear. We assume $O_f(x)$ is infinite in the following : 
    
    If the $f,Z,x$ are defined over some number field(up to a Mobi\"{u}s transformation) : we can assume that $x_1,\cdots ,x_u$ are the coordinates that are preperiodic under the corresponding $f_i$ of $f$, with the preperiodic relation $f_j^{s_j}(x_j)=f_j^{s_j+p_j}(x_j)$ for $j=1,\cdots, u$. By proposition 3.1 and 3.2:  there are positive dimensional subvarieties $W'_1\ , \cdots , W_m'\subseteq \mathbb{P}^{(n-u)}_{\overline{\mathbb{Q}}}$, s.t. $(f')^{\circ k}(W'_i)\subseteq W_{i+k}' $ and for $x'=(x_{u+1},\cdots ,x_{n})\ , \ f'=f_{u+1}\times\cdots \times f_n$ , $O_{(f')^{\circ m}}((f')^{\circ i}(x'))$ is generic in $W'_i$ for $i=1,\cdots ,m$. Denote $[m,p_1,\cdots , p_u]$ by $M$, then the $O_{f^{\circ M}}(f^t(x))$ is generic in the subvariety $W_t:=\{ (f^{\circ M}_1(f_1^t(x_1)),\cdots ,$ 
    $f_u^{\circ M}(f_u^t(x_u)))\ | \ k\in \mathbb{N}\}\times W'_t$ for $t=1,\cdots , [m,p_1,\cdots , p_u]$ .($[ \cdot ,\cdot  ]$ is the symbol for least common multiple) Since $Z\ \cap \  O_{f}(x)$ is an infinite set, there must be some $t\in \{ 1,\cdots , M \}$ , s.t. $Z \cap O_{f^M}(f^t(x))$ is also infinite, then $W_t\subseteq Z$ and $f^{kM}(f^t(x))\in Z$ for all but finitely many $k\in \mathbb{N}$. Then $T=\{ kM+t \ | \ k\in \mathbb{N}\ , \ W_t\subseteq Z  \}\backslash S$ for some finite set $S$. The conclusion holds in this case.
    In general the field generated by all the coefficients of each $f_i$ and $x_i$ is a finitely generated field $F/\mathbb{Q}$, which can be viewed as a rational function field of some smooth quasi-projective variety $\mathfrak{B}/K$ for some number field $K$. Then each $x_i$ can be viewed as a regular section of the structure morphism $\pi : \mathbb{P}_{\overline{\mathbb{Q}}}^1\times \mathfrak{B} \rightarrow \mathfrak{B}$ with $p_2\circ x_i=id_{\mathfrak{B}}$ , also the $f_i$ can be viewed as a rational section of the map $\mathbf{p}_i : \text{Rat}_{d_i}/\overline{\mathbb{Q}}\ \times \mathfrak{B}\rightarrow \mathfrak{B}$ induced by the coefficients of $f_i$ , with $p_2\circ \mathbf{p}_i=id_\mathfrak{B}$ and $\text{deg}(f_i)=d_i$. We can assume that all the section $\mathbf{p}_i$ are regular section after excluding the locus where some $f_i$ is degenerated. In other words, the $\mathfrak{B}$ parametrized a family of dynamical system. 
    
    Similarly assume that $x_1,\cdots,x_u $ are all the preperiodic marked point then there is a parameter $c\in \mathfrak{B}(\overline{K})$, s.t. $x_{j}(c)$ is not preperiodic under $f_j(c)$ for $j=u+1,\cdots ,n$: first choose a parameter $c_0\in \mathfrak{B}(\overline{K})$ where all the section are smooth assume $x_{u+1}(c_0),\cdots , x_{v}(c_0)$ are all the preperiodic points among $x_{u+1}(c_0),\cdots , x_n(c_0)$. Choose a prime $\mathfrak{p}$ of $K(c_0)$ (residue field of $\mathfrak{B}$ at $c_0$) s.t all $f_i(c_0)$ have good reduction at $\mathfrak{p}$. First each $x_t(c_0)$ will be mapped to some invariant cycles of $\mathfrak{p}$-adic unit disks under $f_t(c_0)$ for $t=v+1,\cdots , n$, and there are finitely many periodic points of $f_t(c_0)$ in the cycles (multiple periodic points corresponds to 'parabolic' periodic point for reduced map $\overline{f_t(c_0)}$); each periodic point in the cycle is the specialization of some periodic marked point $c_{t,l}$ of $f_t$ at $c_0$.  Consider a small disk $D$ (in the sense of $\mathfrak{p}$-adic analytification of $\mathfrak{B}$) at the parameter $c_0$: the equation $x_t=c_{t,l}$ defines a codimension at least $1$ analytic subvarieties in $D$ for each $t$, so it suffices to choose $c$ in $D$ outside the union of such subvaireties.
    
    By proposition 3.1, there is a set of primes $P$ of $K(c_0)$ of positive density and positive integer $m_0$, s.t. $F(c_0)=f_1(c_0)\times \cdots \times f_n(c_0)$ has good reduction at any prime $\mathfrak{p}$ in $P$, and $O_{\overline{f_j}}(\overline{f^{m_0}(x_{j}(c_0))})\ \cap \ \text{Crit}(\overline{f_j})=\emptyset$. For such prime $\mathfrak{p}$: denote $K(c_0)$ by $L$, first do a base change $\mathfrak{B}\otimes L_\mathfrak{p}=\mathfrak{B}_{L_\mathfrak{p}}$, then there is a unit disk $D_{c_0}\subset \mathfrak{B}_{L_\mathfrak{p}}^{an}$ corresponds to the residue class $\overline{c_0}\in \mathfrak{B}_{L_\mathfrak{p}}\otimes l_\mathfrak{p}$, where $\mathfrak{B}^{an}$ denotes the analytification w.r.t the metric on $L_\mathfrak{p}$. If we can choose a closed point in $c'\in D_{c_0}(L_\mathfrak{p})$ that maps to the generic point of $\mathfrak{B}$ via the projection $\mathfrak{B}_{L_\mathfrak{p}}\rightarrow \mathfrak{B} $. The value of the coefficients and the marked points at $c'$ are all not equals to $\infty$, and the value of each function defines an embedding $K(\mathfrak{B})\rightarrow L_\mathfrak{p}$. We do the base change of $(f,x)$ to from $K(\mathfrak{B})$ to $L_\mathfrak{p}$ via the embedding, then we obtain a system $\pi: (\mathbb{P}^1)^n\rightarrow \text{Spec}(\mathcal{O}_L)$ of smooth scheme of finite type and $F$ a morphism over $\text{Spec}(\mathcal{O}_L)$, with $(f_{\mathfrak{m}_\mathfrak{p}},x_{\mathfrak{m}_\mathfrak{p}})=((\overline{f(c_0)},\overline{x(c_0)})$, where $\mathfrak{m}_\mathfrak{p} \subset L_\mathfrak{p}$ is the open unit disk. The rest is similar to the number field case . 
    
    The existence of such closed point $c'$ follows from the fact that the base field is complete non-trivially metrized , thus the space $\mathfrak{B}(L_\mathfrak{p})$ is a Baire space, then it suffices to choose closed point that lies outside a countable union of proper closed subvarieties, which is meager. 
    
    The theorem is proved. 
\end{proof}

As a result, we prove that the DML holds for product of endomorphisms on any dimension $1$ curves over $\mathbb{C}$:

\begin{proof}[Proof of theorem \ref{mainthmone}]
    We may assume all $f_i$ are dominant, otherwise we can reduce the number of coordinates as in the previous periodic coordinate case. By taking normalization and projective closure we obtain smooth and projective model $\overline{C_i}$, then extend each $f_i$ to endomorphism $\overline{f_i}:\overline{C_i}\rightarrow \overline{C_i}$.
    If there is some $\overline{C_i}$ with $g(\overline{C_i})>1$, then some iteration of $\overline{f_i}$ equals to identity, by passing same iteration of the product map we again can reduce the number of coordinates. 

    Assume genus $g(\overline{C_i})=1$ for $i=1,\cdots ,m$, $g(\overline{C_j})=0$ for $j=m+1,\cdots , n$ and the point $x=(x_1,\cdots,x_n)\in \prod \overline{C_i}$ has no preperiodic $x_i$ under $\overline{f_i}$, and the dynamical system $(\overline{C_1}\times\cdots \overline{C_m}\ ,\ F_m=\overline{f_1}\times\cdots\times \overline{f_m})$ is defined over some finitely generated field $L_1/L$, similarly $F_n=\overline{f_{m+1}}\times\cdots\times \overline{f_{n}}$ defined over some $L_2/L$ for some number field $L$. Then by proof of theorem 3.2, there exists an infinite set $P$ of primes of some number field $K$, and some field embedding $i_\mathfrak{p}: L_2\hookrightarrow K_\mathfrak{p}$ for each $\mathfrak{p}\in P$, s.t. after do the base change of $F_n$ via the embedding, we obtain a smooth dynamical system of finite type over $\text{Spec}(\mathcal{O}_{K_\mathfrak{p}})$, and the orbit of $(x_{m+1},\cdots,x_n)$ under reduction over $\mathfrak{m}_\mathfrak{p}$ does not goes into the critical locus when the times of iteration larger than some given number $m_0$. Extend $i_\mathfrak{p}$ to $L_1L_2\hookrightarrow K_\mathfrak{p}$ s.t. 
    the base change of $F_m$ also has good reduction over $\mathfrak{m}_\mathfrak{p}$. Since $F_m$ is \'{e}tale, the system $(F_m\times F_n)/K_\mathfrak{p}$ has good reduction over $\mathfrak{m}_\mathfrak{p}$ and the orbit of $\overline{F^{m_0}(x)}$ does not go into the critical locus under reduction map. We conclude proof by proposition 3.1.

\end{proof}

\section{Effectiveness of the upper bound on the period}

 For a Galois extension $L/K$ of number fields, any unramified prime ideal $\mathfrak{p}$ of $K$, denote $[\frac{L/K}{\mathfrak{p}}]=\text{Frob}_\mathfrak{p}$ the \textbf{Artin symbol} of $\mathfrak{p}$ which is the a conjugacy class of $G=\text{Gal}(L/K)$. For each conjugacy class $C$ of $G$, define

 \begin{enumerate}
     \item 
     \begin{equation*}
     \pi_C(x,L/K)=\left | \left \{  \mathfrak{p}:\mathfrak{p}\ \text{unramified in} \ L \ , \left [\frac{L/K}{\mathfrak{p}} \right ]=C , \ \text{N}_{K/\mathbb{Q}}\mathfrak{p}\le x  \right \} \right | \ ;
     \end{equation*}
     \item (Logarithmic integral)
     \begin{equation*}
         \text{Li}(x)=\int_2^x\frac{\text{d} t}{\log t} \sim \frac{x}{\log x} \ , \ x\rightarrow \infty .
     \end{equation*}

 \end{enumerate}
 
 The following is due to [LO77]:

 \begin{theorem}
     For a Galois extension $L/K$ of number fields, if $x\ge \exp{(10 n_L(\log d_L)^2)}$, then 
     \begin{equation*}
         \left |  \pi_C(x,L/K)-\frac{|C|}{|G|}\text{Li}(x) \right | \le \frac{|C|}{|G|}\text{Li}(x^\beta)+c_1 x\exp{(-c_2n_L^{-\frac{1}{2}}(\log x)^{\frac{1}{2}})},
     \end{equation*}
     with 
     \begin{equation*}
        \beta< \max \left [1-(16\log d_L)^{-1},1-(c_3d_L^{1/n_L})^{-1} \right  ],
     \end{equation*}
    where $n_L=[L:\mathbb{Q}]$ and $d_L$ is the discriminant of $L/\mathbb{Q}$. The constants $c_1,c_2 , c_3$ are explicitly computable and depend on $L,K$,
 \end{theorem}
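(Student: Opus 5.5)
The plan is to follow the analytic route of Lagarias and Odlyzko. We first prove the analogous estimate for the weighted prime-counting function
\begin{equation*}
\psi_C(x,L/K)=\sum_{\substack{\mathrm{N}\mathfrak{p}^m\le x\\ [\frac{L/K}{\mathfrak{p}}]^m=C}}\log \mathrm{N}\mathfrak{p},
\end{equation*}
and then pass to $\pi_C$ by partial summation.

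\textbf{Step 1: reduction to an abelian extension.} Fix $g\in C$ and let $H=\langle g\rangle$ be the cyclic subgroup it generates, with fixed field $E=L^H$. By Deuring's reduction, $\psi_C(x,L/K)$ equals $\frac{|C|}{|G|}\cdot|H|$ times the count of primes of $E$ with Frobenius $g$ in the abelian extension $L/E$, up to an error of size $O(n_L\log(d_L)\sqrt{x})$ coming from ramified primes and prime powers of degree $>1$. By orthogonality of characters of $H$, this count is an average of $\psi(x,\chi)$ over the Hecke characters $\chi$ of $L/E$. Each Hecke $L$-function $L(s,\chi)$ is entire (except for the pole of $\zeta_E$ at $s=1$) and satisfies a functional equation whose conductor is controlled by $d_L$.

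\textbf{Step 2: the truncated explicit formula.} For each $\chi$ we will derive, via Perron's formula and contour shifting,
\begin{equation*}
\psi(x,\chi)=\delta_\chi x-\sum_{|\gamma|<T}\frac{x^\rho}{\rho}+R(x,T),
\end{equation*}
where the sum runs over nontrivial zeros $\rho=\beta+i\gamma$ and $\delta_\chi=1$ exactly when $\chi$ is trivial. The remainder $R(x,T)$ is bounded explicitly in terms of $n_L$, $\log d_L$, $x$ and $T$, using Hadamard factorisation and the functional equation to count zeros in the windows $|\gamma-t|<1$. Summing over $\chi$ corresponds to working with the zeros of the Dedekind zeta function $\zeta_L=\prod_\chi L(s,\chi)$.

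\textbf{Step 3: the zero-free region, which is the main obstacle.} We need to show that $\zeta_L$ has no zeros in
\begin{equation*}
\sigma>1-\frac{1}{c\log\bigl(d_L(|t|+2)^{n_L}\bigr)},
\end{equation*}
except possibly for one real simple zero $\beta_0$. This follows from the de la Vall\'ee Poussin argument with the positivity of $3+4\cos\theta+\cos2\theta$ applied to $-\zeta_L'/\zeta_L$. The exceptional zero $\beta_0$ must be separated off, and it produces the term $\frac{|C|}{|G|}\mathrm{Li}(x^{\beta})$. The stated constraint $\beta<\max\bigl[1-(16\log d_L)^{-1},\,1-(c_3d_L^{1/n_L})^{-1}\bigr]$ comes from Stark's bound on Siegel zeros, and this is the delicate, effectivity-critical input. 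I would first try to quote Stark's lemma directly, in the form that says any real zero close to $1$ of $\zeta_L$ is a zero of $\zeta_F$ for a quadratic subfield $F$, and then apply a class-number-type lower bound for $L(1,\chi)$ there.

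\textbf{Step 4: assembling the estimate.} With the zero-free region in hand, the contribution of the non-exceptional zeros is bounded by $x\exp\bigl(-c\,\log x/\log(d_LT^{n_L})\bigr)$ times the count of zeros up to height $T$. We choose $T=\exp\bigl(n_L^{-1/2}(\log x)^{1/2}\bigr)$ to balance this against $R(x,T)$. The hypothesis $x\ge\exp\bigl(10n_L(\log d_L)^2\bigr)$ ensures that the $\log d_L$ terms are dominated, which yields the error $c_1x\exp\bigl(-c_2n_L^{-1/2}(\log x)^{1/2}\bigr)$ for $\psi_C$. Finally, partial summation converts the bound for $\psi_C$ into the stated bound for $\pi_C$; prime powers and ramified primes contribute only $O(\sqrt{x})$, which is absorbed into the error term.
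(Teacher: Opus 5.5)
The paper gives no proof of this statement; it quotes it from Lagarias--Odlyzko. Your outline is essentially their argument: Deuring's reduction to the cyclic extension $L/E$, a truncated explicit formula over the zeros of $\zeta_L=\prod_\chi L(s,\chi,L/E)$, the de la Vall\'ee Poussin zero-free region with at most one real simple exceptional zero, $T=\exp(n_L^{-1/2}(\log x)^{1/2})$, and partial summation.

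The step that fails as written is Step~3. The form of Stark's lemma you plan to quote says that a real zero of $\zeta_L$ in $[1-(16\log d_L)^{-1},1)$ is a zero of $\zeta_F$ for a quadratic subfield $F\subseteq L$. That form needs $L$ to have a normal tower over $\mathbb{Q}$ (e.g.\ $L/\mathbb{Q}$ Galois). In the theorem only $L/K$ is Galois. Stark's argument applied to $L/K$ (the zero is simple, hence a zero of exactly one $L(s,\chi,L/K)$, with $\chi$ one-dimensional and real) only yields a field $K\subseteq F\subseteq L$ with $[F:K]\le 2$. This $F$ is quadratic over $\mathbb{Q}$ only when $K=\mathbb{Q}$, and if $F=K$ you have gained nothing.

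Concretely, take $L=K$ an $S_4$-quartic field. Then $G$ is trivial and the statement is the prime ideal theorem for $K$. Since $K$ has no quadratic subfield, your lemma would assert that $\zeta_K$ has no real zero in $[1-(16\log d_K)^{-1},1)$. That is not what Stark proves for such fields, which is precisely why his effective bounds separate fields with a normal tower from the general case.

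For arbitrary $K$ you must pass to the Galois closure $\widetilde{L}$ of $L/\mathbb{Q}$. Since $\zeta_{\widetilde{L}}/\zeta_L$ is entire, either $\beta<1-(16\log d_{\widetilde{L}})^{-1}$, or $\beta$ is a zero of $\zeta_F$ for some quadratic $F\subseteq\widetilde{L}$. Both alternatives give an effective lower bound for $1-\beta$. However, the bound is in terms of $\widetilde{L}$, whose degree can be $n_L!$, and is not of the shape $(c_3d_L^{1/n_L})^{-1}$ with $c_3$ absolute. That suffices for the statement as the paper phrases it, since there $c_3$ may depend on $L,K$. But Step~3 has to be rewritten this way, and the absolute-constant version you are aiming for is only available under a normal-tower hypothesis.

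Two smaller points.

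First, even when $F\subseteq L$ is quadratic, the class-number bound $L(1,\chi_F)\gg d_F^{-1/2}$ combined with the usual mean-value estimate $L'(\sigma,\chi_F)\ll(\log d_F)^2$ only gives $1-\beta\gg d_F^{-1/2}(\log d_F)^{-2}$. To land on $(c_3d_L^{1/n_L})^{-1}$ with absolute $c_3$ you need the log-free bound $1-\beta\gg d_F^{-1/2}$, which requires a finer argument.

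Second, in Step~4 the non-exceptional zeros have to be summed with weights $|\rho|^{-1}$. Their contribution is at most $x\exp\bigl(-\log x/(c\log(d_LT^{n_L}))\bigr)\sum_{|\gamma|<T}|\rho|^{-1}$, with $\sum_{|\gamma|<T}|\rho|^{-1}\ll(\log(d_LT^{n_L}))^2$. This holds only after the mirror zero $1-\beta_0$ near $s=0$ has been combined with the constant term of the explicit formula. Multiplying instead by the raw zero count $N(T)\asymp T\log(d_LT^{n_L})$, as you wrote, loses a factor $T$. With your choice of $T$ and any realistic zero-free-region constant, that factor outweighs the entire saving.
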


 As a corollary , we have the effective short interval Chebotarev density

 \begin{corollary}
     There are some effective computable constant $M=M(K,L)$, $C_1,C_2>0$, s.t. for any $x>M , y>0$ we have
     \begin{align*}
        \left |  \pi_C(x+y,L/K)- \pi_C(x,L/K) \right | \ge C_1\frac{x+y}{\log(x+y)}-C_2\left [\frac{x}{\log(x)}+\frac{(x+y)^\beta}{\beta\log(x+y)}\right ].
     \end{align*}

     In particular, there is some effective computable constant $T$, s.t. for any $y>Tx\ , \ x>M$ we have $\left |  \pi_C(x+y,L/K)- \pi_C(x,L/K) \right |>1$.
 \end{corollary}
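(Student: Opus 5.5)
We derive the corollary from the Lagarias--Odlyzko estimate in the preceding theorem by writing the short-interval count as a difference,
\begin{equation*}
\pi_C(x+y,L/K)-\pi_C(x,L/K),
\end{equation*}
and bounding each term separately. Set $M:=\exp(10n_L(\log d_L)^2)$, so that the theorem applies at both $x$ and $x+y$ whenever $x>M$. The theorem gives a lower bound for $\pi_C(x+y)$ and an upper bound for $\pi_C(x)$:
\begin{align*}
\pi_C(x+y)&\ge \tfrac{|C|}{|G|}\text{Li}(x+y)-\tfrac{|C|}{|G|}\text{Li}((x+y)^\beta)-c_1(x+y)\exp\bigl(-c_2n_L^{-1/2}\log^{1/2}(x+y)\bigr),\\
\pi_C(x)&\le \tfrac{|C|}{|G|}\text{Li}(x)+\tfrac{|C|}{|G|}\text{Li}(x^\beta)+c_1x\exp\bigl(-c_2n_L^{-1/2}\log^{1/2}x\bigr).
\end{align*}
Subtracting, it remains to replace each term by the shape appearing in the statement.

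For the main terms, I would use the effective two-sided bounds
\begin{equation*}
\frac{t}{\log t}\le \text{Li}(t)\le \frac{2t}{\log t}\quad (t\ge t_0),
\end{equation*}
obtained by integrating by parts once; here $t_0$ is absolute, and we enlarge $M$ if necessary. This turns $\frac{|C|}{|G|}\text{Li}(x+y)$ into $C_1\frac{x+y}{\log(x+y)}$ and $\frac{|C|}{|G|}\text{Li}(x)$ into a multiple of $\frac{x}{\log x}$. For the $\beta$-terms, the bound $\text{Li}(t^\beta)\le \frac{t^\beta}{\beta\log t}+O(1)$ (substitute $u=s^{1/\beta}$ in the integral) yields the $\frac{(x+y)^\beta}{\beta\log(x+y)}$ term; the $x^\beta$ contribution is dominated by $\frac{x}{\log x}$ since $\beta<1$.

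The step I expect to be the main obstacle is absorbing the de la Vallée-Poussin type error terms $c_1t\exp(-c_2n_L^{-1/2}\sqrt{\log t})$. Applied at $t=x+y$, this error is not $O\bigl(\frac{x+y}{\log(x+y)}\bigr)$ uniformly in $y$. However, for $t$ larger than an explicit threshold the exponential factor is smaller than $\frac{C_1}{2\log t}$, because $\exp(c\sqrt{\log t})$ eventually exceeds any fixed multiple of $\log t$. So I would enlarge $M$ to exceed that threshold, which is effective in $n_L$, $c_1$ and $c_2$. The error at $x+y$ is then absorbed into half of the main term, giving a new $C_1$. The error at $x$ is absorbed into $C_2\frac{x}{\log x}$.

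For the final claim, take $y>Tx$. The function $t\mapsto t/\log t$ is increasing, so the positive term $C_1\frac{x+y}{\log(x+y)}$ is at least a constant times $T\frac{x}{\log(x+y)}$. Since $\log(x+y)\le \log(T+1)+\log x$, this is comparable to $\frac{T}{\log T}\cdot\frac{x}{\log x}$ for $x>M$. The $\beta$-term is $o\bigl(\frac{x+y}{\log(x+y)}\bigr)$ with an explicit rate, because $(x+y)^{1-\beta}\ge M^{1-\beta}$ and $\beta$ is bounded away from $1$ effectively. Hence choosing $T$ effectively large, so that $C_1\frac{T}{\log T}$ beats $2C_2$ plus the $\beta$-contribution plus $1$, makes the right-hand side exceed $1$. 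This gives $\left|\pi_C(x+y)-\pi_C(x)\right|>1$.
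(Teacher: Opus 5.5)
Your proposal is correct and follows the same route as the paper: apply the Lagarias--Odlyzko bound at $x$ and at $x+y$, subtract, and use two-sided estimates for $\text{Li}$. Your absorption of the error $c_1t\exp(-c_2n_L^{-1/2}\sqrt{\log t})$ into half of the main term by enlarging $M$ is in fact what makes the argument work, since the paper's written proof drops the factor $t$ in that error term and then treats it as if it were logarithmic.

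There are two local slips to fix. The bound $\text{Li}(t^\beta)\le\frac{t^\beta}{\beta\log t}+O(1)$ is false, because $\text{Li}(u)-\frac{u}{\log u}\sim\frac{u}{\log^2 u}$; your own estimate $\text{Li}(s)\le\frac{2s}{\log s}$ at $s=t^\beta$ suffices instead. In the last paragraph, $\log(x+y)\le\log(T+1)+\log x$ goes the wrong way when $y>Tx$, so first use the monotonicity of $t/\log t$ to get $\frac{x+y}{\log(x+y)}\ge\frac{(T+1)x}{\log(T+1)+\log x}$ and compare from there.
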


 \begin{proof}
     We have
\begin{align*}
&\Biggl| \pi_C(x+y,L/K) - \frac{|C|}{|G|}\operatorname{Li}(x+y) 
      + \frac{|C|}{|G|}\operatorname{Li}(x+y) 
      - \frac{|C|}{|G|}\operatorname{Li}(x) 
      + \frac{|C|}{|G|}\operatorname{Li}(x) \\
&\qquad - \pi_C(x,L/K) \Biggr| \\
&\ge \frac{|C|}{|G|}\bigl[\operatorname{Li}(x+y)-\operatorname{Li}(x)\bigr] 
   - \Biggl| \pi_C(x+y,L/K) - \frac{|C|}{|G|}\operatorname{Li}(x+y) \Biggr| \\
&\qquad - \Biggl| \pi_C(x,L/K) - \frac{|C|}{|G|}\operatorname{Li}(x) \Biggr| \\
&\ge \frac{|C|}{|G|}\bigl[\operatorname{Li}(x+y)-\operatorname{Li}(x)\bigr] 
   - \frac{|C|}{|G|}\bigl[\operatorname{Li}((x+y)^\beta)+\operatorname{Li}(x^\beta)\bigr] \\
&\qquad - c_1\Bigl[\exp\bigl(-c_2 n_L^{-1/2}(\log x)^{1/2}\bigr) 
                + \exp\bigl(-c_2 n_L^{-1/2}(\log(x+y))^{1/2}\bigr)\Bigr]=(*),
\end{align*}

and 
\begin{equation*}
   \left (\frac{x}{\log x}\right)'=\frac{\log x-1}{(\log x)^2} \le\frac{1}{\log x}\le \frac{2(\log x-1)}{(\log x)^2}
\end{equation*}
when $x>10$, thus
\begin{equation*}
    \frac{x}{2\log x}\le\text{Li}(x)\le \frac{2x}{\log x}\ , x>100.
\end{equation*}
Also $(\log x)^{1/2}\ge \log(\log x)$ when $x>100$. So we have
\begin{align*}
    (*)\ge &\frac{|C|}{|G|}\left[\frac{x+y}{2\log(x+y)}-\frac{2x}{\log x}-\frac{2x^\beta}{\log x^\beta}-\frac{(x+y)^\beta}{2\log(x+y)^\beta}\right ]\\
    &-c_1\exp{(-c_2n_L^{-1/2})}[\log x+\log(x+y)],
\end{align*}
thus the result follows. 

 \end{proof}

We give an effective version of DML from above :

\begin{proof}[Proof of corollary \ref{cortwo}]

We may assume all the coordinate $x_i$ is not periodic.  For any $f\in \text{M}_d(\overline{\mathbb{Q}})$ defined over $K$, assume all critical points are defined over $K$ and denote $h_{\text{M}_d}(f)$ the module height of $f$, $h_{\text{Crit}}(f)$ the critical height of $f$, by results of \cite{ingram18} we have 
$h_{\text{M}_d}\asymp h_{\text{Crit}}$. Based on $|h(f(x))-d\cdot h(x)|\le C(h_{\text{M}_d}(f))$ and $|h(x)-h_f(x)|\le C'(h_{\text{M}_d}(f))$, where $d=\text{deg}(f)$, $h$ the Weil height on $\mathbb{P}^1$ and $h_f$ the dynamical canonical height of $f$, we have 
\begin{equation*}
    h(f^n(c))\le d^n h_f(c)+C'\le d^nh_{\text{Crit}}(f)+C'\le d^nc_3h_{\text{M}_d}(f)+d^nc_4+C'(h_{\text{M}_d}(f))
\end{equation*}

for any critical point $c$ of $f$, where $C''$ is a constant depends on $f,n$ . So for any prime ideal $\mathfrak{p}$ of $K$, we have 
\begin{align*}
v_\mathfrak{p}(f^n(c)-c)&\le [K:\mathbb{Q}]h(f^n(c)-c)\le [K:\mathbb{Q}][h(f^n(c))+h(c)+2]\\ &\le[K:\mathbb{Q}][(d^n+1)c_3h_{\text{M}_d}(f)+(d^n+1)c_4+2C'(h_{\text{M}_d}(f))+2].
\end{align*}

For any fixed point $x_0$ of $f^n$, $[K(x_0):K]\le d^n$ thus for any prime $\mathfrak{q}$ of splitting field of $f^n=x$ lies above $\mathfrak{p}$ of $K$, the ramification degree $e(\mathfrak{q}/\mathfrak{p})\le d^n$ and $v_\mathfrak{p}((f^n)'(x_0))\ge 1/d^n$(the extended valuation on $\overline{K_\mathfrak{p}}$).

If $x_0$ is a $\mathfrak{p}$-attracting fixed point: as in the argument of proposition 2.2, we have $v_\mathfrak{p}(c-x_0)<v_\mathfrak{p}(f^n(c)-x_0)$ , thus $v_\mathfrak{p}(c-f^n(c))=v_\mathfrak{p}(c-x_0)$. Thus the time $m$ s.t. any root $\alpha$ of $f^{nm}(x)=c$ in the $\mathfrak{p}$-adic disk $D_C$ has valuation less than any number in $\mathfrak{p}$ can be compute effectively from the $n,d,[K:\mathbb{Q}]$ and $h_{M_d}(f)$. And the degree $d$ is bounded by the product of all the degree at critical points. Also $n$ is bounded by $\# k_\mathfrak{p}$, which is bounded by $(\text{char} \ k_\mathfrak{p})^{[K:\mathbb{Q}]}$.

In order to achieve tamely ramification and good separable reduction at $\mathfrak{p}$ , it suffices to choose $\mathfrak{p}$ not divides $\text{Res}(f_i),\ i=1,\cdots , m $ and $\text{char} (k_\mathfrak{p})\nmid \prod (\sum_{j_1<j_2<\cdots < j_s}d_{i,j_k})\ , 1\le s\le t_i$. Choose $n_0$ s.t. any prime $\mathfrak{q}$ of $K$ with $N_{K/\mathbb{Q}}\ \mathfrak{q}>n_0$ satisfies the above property(note that $n_0$ can be bounded above from $h_{M_{d_i}}(f_i)$). The $n_0$ is effective computable since the resultant of $f_i$ is a regular function on the space of rational function of degree $d_i$ . Denote the classes of $\text{Gal}(L/K)$ obtained from proposition 3.1 for the rational functions $f_1\cdots ,f_n$ by $C$, where $L=(K(\{f_i^{-N}(c_{i,j}))\ | \ i=1,\cdots ,m ,\ j=1,\cdots , t_i\})$. By corollary 4.1, there is some effective computable constant $T(N,d_i,K)$, s.t. there is a prime \\
$\mathfrak{p}\in \pi_C(Tx_0,L/K)\backslash \pi_C(x_0,L/K)$. From the above argument, $N$ can be bounded above explicitly by $h_{M_{d_i}}(f_i),d_i,K$. Thus there is a prime $\mathfrak{p}_0$ of $K$ s.t. the orbit of $x$ under $F$ goes into some \'{e}tale cycles under reduction mod $\mathfrak{p}_0$, the results follows from proposition 3.3 .
    
\end{proof}

\printbibliography

@article{BGT10,
 ISSN = {00029327, 10806377},
 URL = {http://www.jstor.org/stable/40931051},
 author = {J. P. Bell and D. Ghioca and T. J. Tucker},
 journal = {American Journal of Mathematics},
 number = {6},
 pages = {1655--1675},
 publisher = {Johns Hopkins University Press},
 title = {THE DYNAMICAL MORDELL-LANG PROBLEM FOR ÉTALE MAPS},
 urldate = {2026-03-01},
 volume = {132},
 year = {2010}
}

@book{BGT16,
  author    = {Bell, Jason P. and Ghioca, Dragos and Tucker, Thomas J.},
  title     = {The Dynamical Mordell--Lang Conjecture},
  series    = {Mathematical Surveys and Monographs},
  volume    = {210},
  publisher = {American Mathematical Society},
  address   = {Providence, RI},
  year      = {2016},
  isbn      = {978-1-4704-2408-4},
  doi       = {10.1090/surv/210},
  note      = {136--142}
}

@book{FV02,
  author    = {Fesenko, Ivan B. and Vostokov, Sergei V.},
  title     = {Local Fields and Their Extensions},
  edition   = {2},
  series    = {Translations of Mathematical Monographs},
  volume    = {121},
  publisher = {American Mathematical Society},
  address   = {Providence, RI},
  year      = {2002},
  pages     = {xxii + 345},
  isbn      = {978-0-8218-3259-2},
  doi       = {10.1090/mmono/121}
}

@book{Serre79,
  author    = {Serre, Jean-Pierre},
  title     = {Local Fields},
  translator = {Greenberg, Marvin Jay},
  series    = {Graduate Texts in Mathematics},
  volume    = {67},
  publisher = {Springer-Verlag},
  address   = {New York},
  year      = {1979},
  isbn      = {0-387-90424-7},
  doi       = {10.1007/978-1-4757-5673-9},
  mrnumber  = {554237}
}

@book{Silverman07,
  author    = {Silverman, Joseph H.},
  title     = {The Arithmetic of Dynamical Systems},
  publisher = {Springer New York},
  year      = {2007},
  edition   = {1},
  series    = {Graduate Texts in Mathematics},
  volume    = {241},
  doi       = {10.1007/978-0-387-69904-2},
  isbn      = {978-0-387-69903-5}
}

@article{RiveraLetelier04,
  author    = {Rivera-Letelier, Juan},
  title     = {Sur la structure des ensembles de Fatou p-adiques},
  year      = {2004},
  eprint    = {math/0412180},
  archivePrefix = {arXiv},
  primaryClass = {math.DS},
  note      = {Preprint submitted on 8 Dec 2004}
}

@article{GTZ07,
  author   = {Dragos Ghioca and Thomas J. Tucker and Michael E. Zieve},
  title    = {Intersections of polynomial orbits, and a dynamical {Mordell--Lang} conjecture},
  journal  = {Inventiones mathematicae},
  volume   = {171},
  number   = {2},
  pages    = {463--483},
  year     = {2008},
  month    = feb,
  issn     = {1432-1297},
  doi      = {10.1007/s00222-007-0087-5},
  url      = {https://doi.org/10.1007/s00222-007-0087-5}
}

@article{J14,
     author = {Jones, Rafe},
     title = {Galois representations from pre-image trees: an arboreal survey},
     journal = {Publications math\'ematiques de Besan\c{c}on. Alg\`ebre et th\'eorie des nombres},
     pages = {107--136},
     year = {2013},
     publisher = {Presses universitaires de Franche-Comt\'e},
     doi = {10.5802/pmb.a-154},
     language = {en},
     url = {https://www.numdam.org/articles/10.5802/pmb.a-154/}
}

@article{X14,
  author  = {Xie, Junyi},
  title   = {Dynamical Mordell--Lang conjecture for birational polynomial morphisms on ${\mathbb {A}}^2$},
  journal = {Mathematische Annalen},
  volume  = {360},
  number  = {1},
  pages   = {457--480},
  year    = {2014},
  doi     = {10.1007/s00208-014-1039-1}
}

@article{X15,
  title={The Dynamical Mordell-Lang Conjecture for polynomial endomorphisms of the aﬃne plane},
  author={Junyi Xie},
  journal={Ast{\'e}risque},
  year={2015},
  url={https://api.semanticscholar.org/CorpusID:117971659}
}

@article{XYZ26,
  title         = {Dynamical Mordell--Lang conjecture for split self-maps of affine curve times projective curve},
  author        = {Xie, Junyi and Yang, She and Zheng, Aoyang},
  year          = {2026},
  month         = {02},
  eprint        = {2602.08608},
  archiveprefix = {arXiv},
  primaryclass  = {math.DS},
  doi           = {10.48550/arXiv.2602.08608}
}

@article{GX18,
  title     = {Algebraic dynamics of skew-linear self-maps},
  author    = {Ghioca, Dragos and Xie, Junyi},
  journal   = {Proceedings of the American Mathematical Society},
  volume    = {146},
  number    = {10},
  pages     = {4369--4387},
  year      = {2018},
  doi       = {10.1090/proc/14104}
}

@article{GNY19,
  title     = {The Dynamical Manin-Mumford Conjecture and the Dynamical Bogomolov Conjecture for split rational maps},
  author    = {Ghioca, Dragos and Nguyen, Khoa D. and Ye, Hexi},
  journal   = {Journal of the European Mathematical Society},
  volume    = {21},
  number    = {5},
  pages     = {1571--1594},
  year      = {2019},
  doi       = {10.4171/JEMS/869}
}

@article{BGHKST13,
  title     = {Periods of rational maps modulo primes},
  author    = {Benedetto, Robert L. and Ghioca, Dragos and Hutz, Benjamin and Kurlberg, P{\"a}r and Scanlon, Thomas and Tucker, Thomas J.},
  journal   = {Mathematische Annalen},
  volume    = {355},
  number    = {2},
  pages     = {637--660},
  year      = {2013},
  doi       = {10.1007/s00208-012-0799-8}
}

@article{BGKT12,
  title     = {A case of the dynamical Mordell-Lang conjecture},
  author    = {Benedetto, Robert L. and Ghioca, Dragos and Kurlberg, P{\"a}r and Tucker, Thomas J.},
  journal   = {Mathematische Annalen},
  volume    = {352},
  number    = {1},
  pages     = {1--26},
  year      = {2012},
  doi       = {10.1007/s00208-010-0621-4}
}

@article{MS25,
  title     = {On the dynamical Bogomolov conjecture for families of split rational maps},
  author    = {Mavraki, Niki Myrto and Schmidt, Harry},
  journal   = {Duke Mathematical Journal},
  volume    = {174},
  number    = {5},
  pages     = {803--856},
  year      = {2025},
  doi       = {10.1215/00127094-2024-0041}
}

@article{P23,
  title     = {Tame rational functions: Decompositions of iterates and orbit intersections},
  author    = {Pakovich, Fedor},
  journal   = {Journal of the European Mathematical Society},
  volume    = {25},
  number    = {10},
  pages     = {3953--3978},
  year      = {2023},
  doi       = {10.4171/JEMS/???}
}

@article{ingram18,
  title        = {The critical height is a moduli height},
  author       = {Ingram, Patrick},
  journal      = {Duke Mathematical Journal},
  volume       = {167},
  number       = {7},
  pages        = {1311--1346},
  year         = {2018},
  month        = may,
  doi          = {10.1215/00127094-2017-0053},
  url          = {https://doi.org/10.1215/00127094-2017-0053}
}

@article{Poo14,
  title = {p-adic interpolation of iterates},
  author = {Poonen, Bjorn},
  journal = {Bulletin of the London Mathematical Society},
  volume = {46},
  number = {3},
  pages = {525--527},
  year = {2014},
  publisher = {Wiley Online Library},
  doi = {10.1112/blms/bdu010},
  url = {https://doi.org/10.1112/blms/bdu010}
}

@article{ghioca2009periodic,
  author  = {Dragos Ghioca and Thomas J. Tucker},
  title   = {Periodic points, linearizing maps, and the dynamical {M}ordell--{L}ang problem},
  journal = {Journal of Number Theory},
  volume  = {129},
  number  = {6},
  pages   = {1392--1403},
  year    = {2009},
  doi     = {10.1016/j.jnt.2008.09.014},
}

@misc{xie2023around,
  title         = {Around the dynamical {M}ordell--{L}ang conjecture},
  author        = {Junyi Xie},
  year          = {2023},
  eprint        = {2307.05885},
  archivePrefix = {arXiv},
  primaryClass  = {math.AG},
  url           = {https://arxiv.org/abs/2307.05885}
}

@article{Ghioca13,
  title={The Mordell-Lang Question for Endomorphisms of Semiabelian Varieties},
  author={Dragos Ghioca and Thomas J. Tucker and Michael E. Zieve},
  journal={arXiv: Number Theory},
  year={2013},
  url={https://api.semanticscholar.org/CorpusID:1179384}
}

@article{GhiD19,
 author = {Ghioca, Dragos},
 title = {The dynamical {Mordell}-{Lang} conjecture in positive characteristic},
 fjournal = {Transactions of the American Mathematical Society},
 journal = {Trans. Am. Math. Soc.},
 issn = {0002-9947},
 volume = {371},
 number = {2},
 pages = {1151--1167},
 year = {2019},
 language = {English},
 doi = {10.1090/tran/7261},
 zbMATH = {6993263},
 Zbl = {1461.11092}
}

@article{Yang2024,
  author = {Yang, She},
  title = {Dynamical Mordell–Lang conjecture for totally inseparable liftings of Frobenius},
  journal = {Mathematische Annalen},
  year = {2024},
  volume = {389},
  number = {2},
  pages = {1639--1656},
  month = {6},
  note = {Published online: 01 June 2024},
  issn = {1432-1807},
  doi = {10.1007/s00208-023-02682-y},
  url = {https://doi.org/10.1007/s00208-023-02682-y}
}

\end{document}